\newtheorem{theorem}{Theorem}[section]
\newtheorem{proposition}[theorem]{Proposition}
\newtheorem{corollary}[theorem]{Corollary}
\newtheorem{example}[theorem]{Example}
\newtheorem{lemma}[theorem]{Lemma}
\newtheorem{question}[theorem]{Question}
\newcommand{\asc}{{\rm asc}}
\newcommand{\des}{{\rm des}}
\newcommand{\esd}{{\rm esd}}
\newcommand{\exc}{{\rm exc}}
\newcommand{\fix}{{\rm fix}}
\newcommand{\Fix}{{\rm Fix}}
\newcommand{\sd}{{\rm sd}}
\newcommand{\supp}{{\rm supp}}
\newcommand{\bB}{{\mathcal B}}
\newcommand{\dD}{{\mathcal D}}
\newcommand{\fF}{{\mathcal F}}
\newcommand{\iI}{{\mathcal I}}
\newcommand{\sS}{{\mathcal S}}
\newcommand{\wW}{{\mathcal W}}
\newcommand{\RR}{{\mathbb R}}
\newcommand{\fS}{{\mathfrak S}}
\newcommand{\NN}{{\mathbb N}}
\renewcommand{\to}{\rightarrow}
\begin{document}
\title[Local $h$-polynomials and real-rootedness]
{Local $h$-polynomials, uniform triangulations
and real-rootedness}

\author{Christos~A.~Athanasiadis}

\address{Department of Mathematics\\
National and Kapodistrian University of Athens\\
Panepistimioupolis\\ 15784 Athens, Greece}
\email{caath@math.uoa.gr}

%\date{\today}
\thanks{Research supported by the Hellenic 
Foundation for Research and Innovation (H.F.R.I.) 
under the `2nd Call for H.F.R.I. Research Projects 
to support Faculty Members \& Researchers' 
(Project Number: HFRI-FM20-04537).}
\thanks{ \textit{Mathematics Subject 
Classifications}: Primary: 05E45; 
                  Secondary: 05A15, 26C10}
\thanks{\textit{Key words and phrases}. 
Triangulation, barycentric subdivision,  
edgewise subdivision, local $h$-polynomial, 
real-rooted polynomial.}

\begin{abstract}
The local $h$-polynomial was introduced by Stanley 
as a fundamental enumerative invariant of a 
triangulation $\Delta$ of a simplex. This 
polynomial is known to have nonnegative and 
symmetric coefficients and is conjectured to be 
$\gamma$-positive when $\Delta$ is flag. This 
paper shows that the local $h$-polynomial has the 
stronger property of being real-rooted when 
$\Delta$ is the barycentric subdivision of an 
arbitrary geometric triangulation $\Gamma$ of the 
simplex. An analogous result for edgewise 
subdivisions is proven. The proofs are based on a 
new combinatorial 
formula for the local $h$-polynomial of $\Delta$, 
which is valid when $\Delta$ is any uniform 
triangulation of $\Gamma$. A combinatorial 
interpretation of the local $h$-polynomial of the 
second barycentric subdivision of the simplex is
deduced.
\end{abstract}

\maketitle

\section{Introduction}
\label{sec:intro}
 
The local $h$-polynomial of a triangulation 
$\Gamma$ of a simplex was introduced by Stanley as 
a powerful tool in his enumerative theory of 
subdivisions of simplicial complexes \cite{Sta92}. 
It is defined by the formula  
\begin{equation} \label{eq:localh-def}
  \ell_V (\Gamma, x) \, = \sum_{F \subseteq V} 
  (-1)^{n - |F|} \, h(\Gamma_F, x),
\end{equation}
where $V$ is the set of vertices of the simplex (the
abstract simplex $2^V$) triangulated by 
$\Gamma$ and $h(\Gamma_F, x)$ is the $h$-polynomial
of the restriction $\Gamma_F$ of $\Gamma$ to the face
$F$ of $2^V$ (see Section~\ref{subsec:complexes} for 
any undefined terminology and for more information 
about these concepts). The sequence of coefficients 
of $\ell_V (\Gamma, x)$ is the local $h$-vector of 
$\Gamma$. The significance of local $h$-polynomials 
stems from their appearance in Stanley's locality 
formula \cite[Theorem~3.2]{Sta92}, which expresses the 
$h$-polynomial of a triangulation of a pure simplicial 
complex as a sum of local contributions, one for every 
face of that complex. Local $h$-polynomials appear 
prominently in algebraic and geometric combinatorics 
\cite{Ath12, Ath14, Ath22, Ath23+, GS20, KS16, Sta92} 
and other areas of mathematics, such as algebraic and 
arithmetic geometry~\cite{CMM18, KS16, LPS22+, Sta17}. 
They have also been studied extensively because of 
their own independent interest \cite{Ath16b, AS12, 
Cha94, MGPSS20, KMS19, LPS23, Zh19}.

The remarkable combinatorial properties of local 
$h$-polynomials (see~\cite{Ath16a} for a short 
survey) are reminiscent 
of those of $h$-polynomials of triangulations of 
spheres. Most importantly, $\ell_V(\Gamma, x)$ has 
nonnegative and symmetric coefficients for every 
triangulation $\Gamma$ of the simplex $2^V$ 
\cite{Sta92}. Moreover, it is unimodal for every 
regular triangulation $\Gamma$ of $2^V$ \cite{Sta92} 
and is conjectured to be $\gamma$-positive whenever
$\Gamma$ is flag \cite{Ath12}. 

Gamma-positivity is a property of polynomials with 
nonnegative and symmetric coefficients which is 
implied by real-rootedness; see 
\cite[Section~7.3]{Bra15}. As is the case with 
flag triangulations of spheres and their  
$h$-polynomials~\cite{Ga05}, not all flag 
triangulations of simplices have a real-rooted local 
$h$-polynomial~\cite{Ath16b}. However, one expects
$\ell_V (\Gamma, x)$ to be real-rooted for 
sufficiently nice flag triangulations $\Gamma$, so
it makes sense to ask the following general 
question.

\begin{question} \label{que:main} 
Which flag triangulations of the simplex have a
real-rooted local $h$-polynomial?
\end{question}

Flag triangulations of the simplex which are known 
to have real-rooted local $h$-polynomials include 
the barycentric subdivision and its $r$-colored
generalization \cite[Section~3.3]{BS21} 
\cite[Section~5.1]{GS20}, the cluster 
subdivision~\cite{Zh18} and the $r$-fold edgewise 
subdivision~\cite{Zh19} of the simplex. The first 
main result of this paper exhibits a large class 
of flag triangulations of the simplex which have 
this property; it can be viewed as a local 
$h$-polynomial analogue of a result of Brenti and 
Welker~\cite{BW08}, which implies that barycentric 
subdivisions of triangulations of spheres have 
real-rooted $h$-polynomials, strengthens a special
case of a result of Juhnke-Kubitzke, Murai and Sieg
\cite{KMS19}, stating that local $h$-polynomials 
of barycentric subdivisions of CW-regular subdivisions 
of the simplex are $\gamma$-positive, and answers the 
first part of \cite[Question~6.3]{Ath22+} in the 
affirmative. We denote by $A_n(x)$ the $n$th
Eulerian polynomial \cite[Section~I.4]{StaEC1},
meaning the descent enumerator for permutations 
in the symmetric group $\fS_n$, defined so that 
$A_n(0) = 1$.

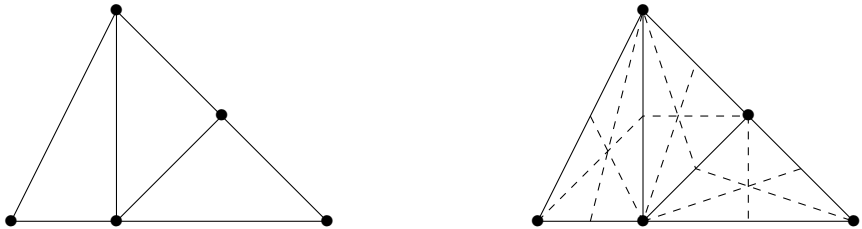
\begin{figure}
\begin{center}
\begin{tikzpicture}[scale=0.7]
\label{fg:sd}

   \draw(-4,0) node(1){$\bullet$};
   \draw(-2,0) node(2){$\bullet$};
   \draw(-2,4) node(5){$\bullet$};
   \draw(0,2) node(7){$\bullet$};
   \draw(2,0) node(8){$\bullet$};

   \draw(-4,0) -- (2,0) -- (-2,4) -- (-4,0);
   \draw(-2,4) -- (-2,0) -- (0,2);

   \draw(6,0) node(11){$\bullet$};
   \draw(8,0) node(12){$\bullet$};
   \draw(8,4) node(15){$\bullet$};
   \draw(10,2) node(17){$\bullet$};
   \draw(12,0) node(18){$\bullet$};

   \draw(6,0) -- (12,0) -- (8,4) -- (6,0);
   \draw(8,4) -- (8,0) -- (10,2);
	
	 \draw [dashed] (6,0) -- (8,2) -- (10,2) -- (10,0);
   \draw [dashed] (7,2) -- (8,0) -- (9,3);
	 \draw [dashed] (7,0) -- (8,4) -- (9,1) -- (12,0);
	 \draw [dashed] (8,0) -- (11,1);

\end{tikzpicture}
\caption{A triangulation of the 2-simplex and 
its barycentric subdivision}
\end{center}
\end{figure}

\begin{theorem} \label{thm:sd} 
Let $\Gamma$ be a triangulation of the 
$(n-1)$-dimensional simplex $2^V$ and let 
$\sd(\Gamma)$ denote its barycentric subdivision. 
Then, the local $h$-polynomial $\ell_V(\sd(\Gamma),x)$ 
is real-rooted and is interlaced by the Eulerian 
polynomial $A_n(x)$. 
\end{theorem}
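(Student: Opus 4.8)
\noindent
The plan is to combine the new combinatorial formula for the local $h$-polynomial of a uniform triangulation (established in the preceding section), specialized to the barycentric subdivision, with the theory of interlacing polynomials. Since restriction commutes with barycentric subdivision, $\sd(\Gamma)_F=\sd(\Gamma_F)$, the defining formula \eqref{eq:localh-def} gives $\ell_V(\sd(\Gamma),x)=\sum_{F\subseteq V}(-1)^{n-|F|}h(\sd(\Gamma_F),x)$. Feeding in the Brenti--Welker expansion $h(\sd(\Delta'),x)=\sum_i h_i(\Delta')\mathcal{A}_{q,i}(x)$ \cite{BW08} (here $q=\dim\Delta'+1$ and the $\mathcal{A}_{q,i}(x)$ are the Brenti--Welker polynomials), together with Stanley's locality formula $h(\Gamma_F,x)=\sum_{\tau\subseteq F}\ell_\tau(\Gamma_\tau,x)$ \cite{Sta92}, and then interchanging summations so that the alternating sums over $F$ collapse, one obtains the sign-free expansion
\[
\ell_V(\sd(\Gamma),x)\;=\;\sum_{\tau\subseteq V}\ \sum_{i}\ [\ell_\tau(\Gamma_\tau,x)]_i\cdot\Lambda^{(|\tau|)}_i(x),
\qquad
\Lambda^{(p)}_i(x):=\sum_{q=p}^{n}(-1)^{n-q}\binom{n-p}{q-p}\mathcal{A}_{q,i}(x),
\]
where $[\,\cdot\,]_i$ denotes the coefficient of $x^i$. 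Stanley's nonnegativity theorem \cite{Sta92} says that each local $h$-polynomial $\ell_\tau(\Gamma_\tau,x)$ of a restriction of $\Gamma$ has nonnegative coefficients, so this exhibits $\ell_V(\sd(\Gamma),x)$ as a \emph{nonnegative} linear combination of the building-block polynomials $\Lambda^{(p)}_i(x)$. (For $\Gamma=2^V$ only the term $\tau=\emptyset$ survives, with $\Lambda^{(0)}_0(x)=\sum_k(-1)^{n-k}\binom{n}{k}A_k(x)$ the derangement polynomial, recovering Stanley's evaluation of $\ell_V(\sd(2^V),x)$.)

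\noindent
With this formula in hand the theorem reduces to two assertions about the polynomials $\Lambda^{(p)}_i(x)$ that actually occur (note $[\ell_\tau(\Gamma_\tau,x)]_i=0$ unless $1\le i\le|\tau|-1$, apart from the one exceptional term $i=0$, $\tau=\emptyset$): first, that for each fixed $p$ the relevant $\Lambda^{(p)}_i(x)$ form an interlacing sequence, so that every nonnegative combination of them is real-rooted; and second, that each of these polynomials is interlaced by $A_n(x)$. Granting these, the theorem is immediate from two standard facts: a nonnegative combination of an interlacing sequence is real-rooted, and if a real-rooted polynomial $g$ interlaces each of $f_1,\dots,f_k$ then it interlaces every nonnegative combination of them. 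Applying the first fact within each level $p$ shows that each inner sum above is real-rooted; applying the second with $g=A_n(x)$ then shows that the whole sum is real-rooted and interlaced by $A_n(x)$. At the top level $p=n$ one has $\Lambda^{(n)}_i(x)=\mathcal{A}_{n,i}(x)$ and $\mathcal{A}_{n,0}(x)=A_n(x)$, so there both assertions are exactly the interlacing lemma of \cite{BW08}.

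\noindent
Establishing the two interlacing assertions for $p<n$ is the technical heart of the argument, and the main obstacle. The plan is to deduce them from the Brenti--Welker interlacing lemma for the sequence $(\mathcal{A}_{q,i}(x))$ by tracking how the interlacing property is transported through the alternating ``derangement-type'' sum over $q$ that produces $\Lambda^{(p)}_i(x)$, and through the passage from a simplex to its restrictions. One can supplement this by computing a closed form for $\Lambda^{(p)}_i(x)$ --- for $i\ge 1$ it comes out as a nonnegative combination of the polynomials $x^{n-m+1}A_m(x)$ twisted by powers of $(1-x)$, with the would-be leading terms cancelling so that $\deg\Lambda^{(p)}_i(x)\le n-1$ --- and recognizing the $\Lambda^{(p)}_i(x)$ as refined derangement-type polynomials whose real-rootedness and interlacing with Eulerian polynomials follow from known results \cite{Bra15, Zh19}. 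An alternative organization is an induction on $n$, peeling $\ell_V(\sd(\Gamma),x)$ off as the top term of the decomposition $h(\sd(\Gamma),x)=\sum_{F\subseteq V}\ell_F(\sd(\Gamma_F),x)$ of the $h$-polynomial of $\sd(\Gamma)$ (real-rooted by \cite{BW08}); but making the interlacing bookkeeping work in the induction seems to require essentially the same analysis. Throughout, care is needed with the direction of interlacing and with leading-coefficient signs --- the local $h$-vector of $\sd(\Gamma)$ has length $n+1$, is symmetric, and vanishes in its first and last entries, whereas $A_n(x)$ has degree $n-1$ --- and the degenerate cases (small $n$, or $\Gamma$ lacking interior faces of some dimension) must be checked by hand.
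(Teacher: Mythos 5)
Your decomposition is correct and, after reindexing, is exactly the paper's Theorem~\ref{thm:uniform} specialized to barycentric subdivision: setting $p=|\tau|$, your $\Lambda^{(p)}_i(x)$ coincides with $\ell_{\fF,n,n-p,i}(x)$ of Equation~(\ref{eq:lnkj-def}), and the nonnegativity of the coefficients $[\ell_\tau(\Gamma_\tau,x)]_i$ is Stanley's theorem, as you note. You have also correctly isolated the two claims to which the theorem reduces: for each level the building blocks should form an interlacing sequence, and each should be interlaced by $A_n(x)$. Your sanity check $\Lambda^{(0)}_0(x)=\sum_k(-1)^{n-k}\binom{n}{k}A_k(x)$ recovering the derangement polynomial for $\Gamma=2^V$ is also right.

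The gap is that you never prove those two claims, and the strategy you sketch for them would not succeed. ``Tracking how the interlacing property is transported through the alternating derangement-type sum over $q$'' is not a viable route: an alternating sum of mutually interlacing real-rooted polynomials need not be real-rooted at all, so the Brenti--Welker interlacing lemma cannot be propagated through the inclusion--exclusion that defines $\Lambda^{(p)}_i(x)$. This is precisely why the paper does not argue that way. Instead, the paper first cancels the signs by proving (Proposition~\ref{prop:dnkj}(b)) that $\ell_{\fF,n,k,j}(x)$ equals the manifestly nonnegative polynomial $d_{n,k,j}(x)$, the excedance enumerator over $w\in\fS_{n+1}$ with $\Fix(w)\subseteq[n-k+1]$ and $w^{-1}(1)=j+1$; the identification passes through Foata's fundamental transformation (Equation~(\ref{eq:pnk-2})) and an explicit inclusion--exclusion. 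Only with this combinatorial model in hand can one derive the recurrences of Propositions~\ref{prop:dnkj} and~\ref{prop:dnkj-rec}, which express the vector $(d_{n,k,j}(x))_j$ as an interlacing-preserving matrix applied to $(d_{n-1,k',i}(x))_i$, and then close an induction (Theorem~\ref{thm:dnkj}) via Lemma~\ref{lem:new} and Lemma~\ref{lem:trans} to install $A_n(x)$ as a common interleaver. Your appeal to~\cite{Bra15,Zh19} as covering this step is not adequate: the second recurrence in Proposition~\ref{prop:dnkj-rec}(a) (for $j>n-k$) and the boundary case $k=n$ in Proposition~\ref{prop:dnkj-rec}(b) (the latter due to Haglund--Zhang~\cite{HZ19}) are specific to these polynomials, and Lemma~\ref{lem:new} is proved in the paper precisely because the required matrix does not appear verbatim in the cited literature. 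In short, your formula and your reduction are the correct ones, but the interlacing theorem for the building blocks --- Theorem~\ref{thm:sd-interlace} --- is the substance of the proof and is missing from the proposal.
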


Figure~\ref{fg:sd} illustrates Theorem~\ref{thm:sd}. 
We demonstrate the applicability of this theorem with 
the following nontrivial corollary. A combinatorial 
interpretation of the local $h$-polynomial of the 
second barycentric subdivision of the simplex is 
given in Section~\ref{sec:sd} (see 
Corollary~\ref{cor:2sd}), thus solving part of 
\cite[Problem~4.12]{Ath16a}.
\begin{corollary} \label{cor:sd} 
The local $h$-polynomial of the $k$th iterated 
barycentric subdivision of the $(n-1)$-dimensional 
simplex is real-rooted and is interlaced by 
$A_n(x)$ for all positive integers $n, k$. 
\end{corollary}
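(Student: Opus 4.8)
The plan is to obtain Corollary~\ref{cor:sd} as an immediate consequence of Theorem~\ref{thm:sd}, by an easy induction on $k$ that exploits the fact that an iterated barycentric subdivision of a simplex is itself the barycentric subdivision of a triangulation of that simplex. The key point to keep in mind throughout is that the relevant vertex set is $V$ with $|V| = n$, and this set does not change under passage to a barycentric subdivision; hence the interlacing polynomial $A_n(x)$ produced by Theorem~\ref{thm:sd} is the \emph{same} Eulerian polynomial at every stage of the iteration, which is precisely what makes the argument go through uniformly in $k$.

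Concretely, I would first record the (standard, and itself an easy induction on $k$) fact that the $k$th iterated barycentric subdivision $\sd^k(2^V)$ is a triangulation of the $(n-1)$-dimensional simplex $2^V$, with vertex set containing $V$. For the base case $k = 1$ of the main induction, apply Theorem~\ref{thm:sd} to the trivial triangulation $\Gamma = 2^V$: its barycentric subdivision is the first barycentric subdivision of the simplex, and the theorem gives that $\ell_V(\sd(2^V), x)$ is real-rooted and interlaced by $A_n(x)$. For the inductive step with $k \geq 2$, set $\Gamma_{k-1} := \sd^{k-1}(2^V)$, a triangulation of the $(n-1)$-simplex $2^V$ by the fact just recorded, and observe that $\sd^k(2^V) = \sd(\Gamma_{k-1})$. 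Applying Theorem~\ref{thm:sd} with $\Gamma = \Gamma_{k-1}$ yields at once that $\ell_V(\sd^k(2^V), x) = \ell_V(\sd(\Gamma_{k-1}), x)$ is real-rooted and interlaced by $A_n(x)$, closing the induction.

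I do not anticipate any real obstacle here: all the content lies in Theorem~\ref{thm:sd}, and the only thing that must be checked carefully is that Theorem~\ref{thm:sd} holds for an \emph{arbitrary} triangulation $\Gamma$ of the simplex (which it does), with the interlacing polynomial $A_n(x)$ depending only on $n$ and not on $\Gamma$; this is exactly the robustness needed to feed the output of one step back in as the input of the next. One could alternatively try to derive an explicit operator carrying $\ell_V(\sd^{k-1}(2^V), x)$ to $\ell_V(\sd^{k}(2^V), x)$ together with an interlacing-preservation lemma for that operator, but the direct argument above renders this detour unnecessary.
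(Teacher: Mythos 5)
Your proof is correct and is exactly the intended derivation: the paper leaves Corollary~\ref{cor:sd} without an explicit proof, presenting it as an immediate consequence of Theorem~\ref{thm:sd}, and your induction on $k$ (noting that $\sd^k(2^V)=\sd(\sd^{k-1}(2^V))$ with $\sd^{k-1}(2^V)$ a triangulation of $2^V$, and that the interlacer $A_n(x)$ depends only on $n$) is precisely the argument the reader is expected to supply.
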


The second main result of this paper is an analogue
of Theorem~\ref{thm:sd} for edgewise subdivisions; 
see \cite{Ath16b} \cite[Section~3.2]{MW17}, the 
references given there and 
Section~\ref{subsec:complexes} for information 
about this important class of triangulations. The 
polynomial $E_{n,r}(x)$ which appears in the 
statement is the descent enumerator for words $w 
\in \{0, 1,\dots,r-1\}^{n-1}$ (see 
Section~\ref{sec:esd} for the precise definition).
\begin{theorem} \label{thm:esd} 
Let $\Gamma$ be a triangulation of the 
$(n-1)$-dimensional simplex $2^V$ and let 
$\esd_r(\Gamma)$ denote its $r$-fold edgewise 
subdivision. Then, the local $h$-polynomial 
$\ell_V(\esd_r(\Gamma),x)$ is real-rooted and is 
interlaced by $E_{n,r}(x)$ for every $r \ge n$. 
\end{theorem}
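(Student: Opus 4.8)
The plan is to derive Theorem~\ref{thm:esd} from the combinatorial formula for the local $h$-polynomial of a uniform triangulation in exactly the way Theorem~\ref{thm:sd} is derived, the only genuinely new input being an interlacing lemma for the edgewise analogues of the derangement polynomials. First one checks that $\esd_r(\Gamma)$ is a uniform triangulation of $\Gamma$: the $r$-fold edgewise subdivision commutes with restriction to a face $F$ of $2^V$, so $\esd_r(\Gamma)_F = \esd_r(\Gamma_F)$, and the combinatorial type of $\esd_r$ of a simplex depends only on its dimension and on $r$. Hence the uniform-triangulation formula applies and writes $\ell_V(\esd_r(\Gamma),x)$ as a nonnegative combination $\sum_j q_j(\Gamma,x)\,\ell_j(x)$, where $\ell_j(x) := \ell_{[j]}(\esd_r(2^{[j]}),x)$ is the local $h$-polynomial of the $r$-fold edgewise subdivision of the $(j-1)$-simplex and each $q_j(\Gamma,x)$ is a polynomial with nonnegative coefficients, assembled from local $h$-polynomials of links in $\Gamma$ (so nonnegative by Stanley's theorem) and independent of $r$. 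Expanding the $q_j(\Gamma,x)$ into monomials, $\ell_V(\esd_r(\Gamma),x)$ becomes a nonnegative combination of the polynomials $x^i\,\ell_j(x)$ that actually occur, all of which have degree at most $n-1$.

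The crux is then the following statement, to be proved in Section~\ref{sec:esd}: for $r \ge n$, the polynomials $x^i\,\ell_j(x)$ occurring above, suitably ordered and then followed by $E_{n,r}(x)$, form an interlacing sequence. Granting this, the standard fact that every nonnegative combination of the initial members of an interlacing sequence is real-rooted and is interlaced by the final member yields at once that $\ell_V(\esd_r(\Gamma),x)$ is real-rooted and interlaced by $E_{n,r}(x)$, which is the assertion of the theorem. To prove the interlacing statement I would first record a combinatorial interpretation of $\ell_j(x)$ — as a descent-type generating polynomial over a distinguished ``derangement-type'' subfamily of $\{0,1,\dots,r-1\}^{j-1}$, equivalently over a class of colored permutations in $\ZZ_r \wr \fS_j$ enumerated by the flag-excedance statistic $\fexc$ — together with the identification of $E_{j,r}(x)$ with $h(\esd_r(2^{[j]}),x)$, and then set up a recursion expressing $\ell_j(x)$ in terms of $\ell_{j-1}(x)$, $x\,\ell_{j-1}(x)$ and $E_{j,r}(x)$ (or similar). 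An induction on $j$, using the standard operations that preserve interlacing and the fact that the $E_{j,r}(x)$ themselves form an interlacing sequence, would propagate the interlacing relations through the whole family; the base cases $\ell_1(x)=0$, $\ell_2(x)=(r-1)x$ and the final comparison with $E_{n,r}(x)$ are checked directly.

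The main obstacle is this interlacing lemma, and in particular the necessity of the hypothesis $r \ge n$. The restriction is forced, not merely convenient: for $r < n$ the polynomial $E_{n,r}(x)$ has degree strictly less than $n-1$, whereas $\ell_V(\esd_r(\Gamma),x)$ can attain degree $n-1$ — already for $n=3$, $r=2$ and a triangulation $\Gamma$ of the triangle with one interior vertex one computes $\ell_V(\esd_2(\Gamma),x)=4(x+x^2)$ — so interlacing by $E_{n,r}(x)$ is outright impossible in that range, even though real-rootedness may well persist. Only when $r \ge n$ does one have $\deg E_{n,r}(x)=n-1$ and $\ell_j(x)\neq 0$ for every $j \le n$, so that $\ell_1(x)\preceq\dots\preceq\ell_n(x)$ is a non-degenerate interlacing sequence; making the recursion and the induction robust enough to cover this full range of $j$, while keeping careful track of the distinct centers of symmetry of the symmetric polynomials $\ell_j(x)$ and of $E_{n,r}(x)$ (which dictates the precise, parity-sensitive notion of interlacing that must be used throughout), is where the real work lies. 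Everything else — the uniform-triangulation formula and the nonnegativity of its coefficients — is available from the earlier sections.
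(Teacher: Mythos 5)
Your high-level strategy matches the paper's: express $\ell_V(\esd_r(\Gamma),x)$ as a nonnegative combination, via the uniform-triangulation formula of Theorem~\ref{thm:uniform}, of polynomials depending only on $\fF$ and on $n$, then prove an interlacing statement for those building blocks and invoke the standard closure of interlacing relations under nonnegative combinations. However, there are two concrete problems with the proposal.

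First, the ``crux'' you single out --- the interlacing lemma for the $x^i\ell_j(x)$ and $E_{n,r}(x)$ --- is not proven; you only sketch an intent to use a flag-excedance interpretation over $\ZZ_r\wr\fS_j$ and a recursion in $j$. The paper instead works directly with the closed Veronese-section formulas
\[
\ell_{\fF,n,k,j}(x)=\sS^r_0\left(x^j(1+x+\cdots+x^{r-1})^{n-k}(x+x^2+\cdots+x^{r-1})^k\right)
\]
from Proposition~\ref{prop:esdr}, together with Lemma~\ref{lem:zhang}, to show that for each fixed $k$ the sequence $(\ell_{\fF,n,k,j}(x))_{0\le j\le n-k}$ is interlacing, then chains through the sequences in $k$ using the symmetry $\iI_n$ of Proposition~\ref{prop:lnkj}(b) and Lemma~\ref{lem:trans}. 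The hypothesis $r\ge n$ enters precisely to ensure $j<r$ so that (\ref{eq:Sx}) puts $\ell_{\fF,n,k,j}(x)$ in the form $x\sS^r_{r-j}(f(x))$ covered by Lemma~\ref{lem:zhang}. Your plan, by contrast, would first have to rederive a combinatorial interpretation of the $\ell_{\fF,n,k,j}(x)$ and then establish the recursion; the hard work is entirely deferred, so the proposal does not constitute a proof. Also note a directional slip: if $E_{n,r}(x)$ is the \emph{final} term of an interlacing sequence, nonnegative combinations of the earlier terms \emph{interlace} $E_{n,r}(x)$, whereas the theorem asserts the opposite ($E_{n,r}(x)\preceq\ell_V(\esd_r(\Gamma),x)$); you would need $E_{n,r}(x)$ at the start of the sequence.

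Second, your explanation of why $r\ge n$ is necessary is factually wrong. You claim that for $n=3$, $r=2$ and $\Gamma$ the stellar subdivision, interlacing by $E_{3,2}(x)$ is ``outright impossible'' because of a degree gap. But $E_{3,2}(x)=1+3x$ has root $-1/3$ and $\ell_V(\esd_2(\Gamma),x)=4x(1+x)$ has roots $0,-1$, and $-1\le -1/3\le 0$, so $E_{3,2}(x)\preceq\ell_V(\esd_2(\Gamma),x)$ does in fact hold; a degree difference of one does not obstruct interlacing. The genuine reason the hypothesis cannot be dropped is illustrated in Example~\ref{ex:esdr}: for $n=6$, $r=2$ the local $h$-polynomial $7x+42x^2+63x^3+42x^4+7x^5$ is not even real-rooted, which is a failure of a much stronger kind than the degree mismatch you describe.
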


The assumption that $r \ge n$ cannot be dropped 
from Theorem~\ref{thm:esd} (see 
Example~\ref{ex:esdr}).

The barycentric subdivision $\sd(\Gamma)$ can be 
defined for any regular CW-complex $\Gamma$. 
Theorem~\ref{thm:sd} naturally raises the following
question. 
\begin{question} \label{que:CW} 
Is $\ell_V(\sd(\Gamma),x)$ real-rooted for every 
CW-regular subdivision $\Gamma$ of $2^V$?
\end{question}

Our strategy to prove Theorem~\ref{thm:sd} is to 
express $\ell_V(\sd(\Gamma),x)$ as a nonnegative linear 
combination of real-rooted polynomials with nonnegative
coefficients which have $A_n(x)$ as a common interleaver.
A prototypical application of this method (see 
also~\cite{CS07}) yields a proof of the aforementioned 
result of Brenti and Welker~\cite{BW08}, namely that 
barycentric subdivisions of Cohen--Macaulay simplicial 
complexes have real-rooted $h$-polynomials, as follows.
One can express (see \cite[Theorem~1]{BW08}) the 
$h$-polynomial of the barycentric subdivision of any 
$(n-1)$-dimensional simplicial complex $\Gamma$ as 
\begin{equation} \label{eq:BW08}
h(\sd(\Gamma), x) = \sum_{k=0}^n h_k(\Gamma) 
                     p_{n,k}(x), 
\end{equation}
where $(h_k(\Gamma))_{0 \le k \le n}$ is the 
$h$-vector of $\Gamma$ and $p_{n,k}(x)$ is the 
descent enumerator of permutations $w \in \fS_{n+1}$ 
such that $w(1) = k+1$ (see Section~\ref{sec:sd}). 
The polynomials $p_{n,k}(x)$ satisfy a standard 
recurrence, from which one may deduce that 
$(p_{n,k}(x))_{0 \le k \le n}$ is an interlacing 
sequence of real-rooted polynomials for every $n \in 
\NN$; see \cite[Example~7.8.8]{Bra15}. As a 
consequence, $p_{n,k}(x)$ is interlaced by $p_{n,0}
(x) = A_n(x)$ for every $k$ and hence so is
$h(\sd(\Gamma), x)$, provided that $\Gamma$ has a 
nonnegative $h$-vector. 

The analogues of Equation~(\ref{eq:BW08}) which 
are needed to prove Theorems~\ref{thm:sd} 
and~\ref{thm:esd} are provided, in a unified way,
by Theorem~\ref{thm:uniform}. We recall that a 
triangulation $\Delta$ of a simplicial complex 
$\Gamma$ is called uniform \cite{Ath22} if the 
face vector of the restriction of $\Delta$ to any 
face of $\Gamma$ depends only on the dimension 
of that face. Theorem~\ref{thm:uniform} expresses 
$\ell_V(\Delta,x)$ as a nonnegative linear 
combination of certain polynomials with nonnegative 
coefficients, when $\Delta$ is any uniform 
triangulation of any triangulation $\Gamma$ of the 
simplex $2^V$. The coefficients which play the role 
of the $h$-vector entries $h_k(\Delta)$ in 
Equation~(\ref{eq:BW08}) are sums of local 
$h$-vector entries of restrictions of $\Gamma$ to 
faces of $2^V$. The polynomials 
which play the role of $p_{n,k}(x)$ do not depend 
on $\Gamma$; their basic properties are described 
by Proposition~\ref{prop:lnkj}. In the special case 
of barycentric subdivision, they turn out again to 
be refined descent enumerators of certain permutations 
in $\fS_{n+1}$. These are studied in detail and shown 
to be real-rooted and interlaced by $A_n(x)$ in 
Section~\ref{sec:sd}. The proof is somewhat technical,
but fairly elementary and self-contained. A parallel 
(but less involved) analysis appears for edgewise 
subdivisions in Section~\ref{sec:esd}.  

This paper is organized as follows. 
Section~\ref{sec:pre} contains background on 
real-rooted polynomials and the face enumeration
of simplicial complexes and their triangulations,
including a short overview of uniform 
triangulations. Given a triangulation $\Gamma$ 
of the simplex $2^V$ and a uniform triangulation 
$\Delta$ of $\Gamma$, Section~\ref{sec:uniform} 
proves the formula for the local $h$-polynomial 
$\ell_V(\Delta, x)$ already discussed (see 
Theorem~\ref{thm:uniform}). Sections~\ref{sec:sd} 
and~\ref{sec:esd} apply this formula in the cases 
of barycentric and edgewise subdivisions, 
respectively, and prove Theorems~\ref{thm:sd} 
and~\ref{thm:esd}. For reasons of simplicity, in 
this paper we restrict our attention to geometric 
triangulations of the simplex, rather than deal 
with more general topological simplicial 
subdivisions. However, the main results are valid 
for the class of quasi-geometric simplicial 
subdivisions, introduced in~\cite{Sta92}. Indeed, 
the proof of Theorem~\ref{thm:uniform} works for 
any topological simplicial subdivision $\Gamma$ 
of the simplex and the nonnegativity of the 
coefficients $c_{k,j}(\Gamma)$ which appear there 
(and is exploited in Sections~\ref{sec:sd} 
and~\ref{sec:esd}) is valid for every 
quasi-geometric simplicial subdivision $\Gamma$.

\section{Preliminaries}
\label{sec:pre}

This section includes background from the theory 
of real-rooted polynomials and the face enumeration 
of simplicial complexes and their triangulations 
which will be essential to understand the following
sections; some excellent expositions are \cite{Bj95,
Bra15, StaCCA}. For $n \in \NN$, throughout this 
paper we set $[n] := \{1, 2,\dots,n\}$ and denote 
by $\RR_n [x]$ the space of all polynomials with 
real coefficients of degree at most $n$.

\subsection{Polynomials} 
\label{subsec:polys}

A polynomial $f(x) = a_0 + a_1 x + \cdots + a_n x^n 
\in \RR_n[x]$ is called
\begin{itemize}
\item[$\bullet$] 
  \emph{symmetric}, with center of symmetry $n/2$, 
	if $a_i = a_{n-i}$ for all $0 \le i \le n$,
\item[$\bullet$] 
  \emph{unimodal}, if $a_0 \le a_1 \le \cdots \le 
	              a_k \ge a_{k+1} \ge \cdots \ge a_n$ 
	for some $0 \le k \le n$,
\item[$\bullet$] 
  \emph{$\gamma$-positive}, with center of symmetry 
	$n/2$, if $f(x) = \sum_{i=0}^{\lfloor n/2 \rfloor} 
	\gamma_i x^i (1+x)^{n-2i}$ for some nonnegative real 
	numbers $\gamma_0, \gamma_1,\dots,\gamma_{\lfloor n/2 
	\rfloor}$,
\item[$\bullet$] 
  \emph{real-rooted}, if every root of $f(x)$ is real, 
	or $f(x) \equiv 0$.
\end{itemize}
Every real-rooted, symmetric polynomial with 
nonnegative coefficients is $\gamma$-positive, every 
$\gamma$-positive polynomial is symmetric and unimodal
and every real-rooted polynomial with nonnegative 
coefficients is unimodal; see \cite{Bra15, Ga05} for 
more information on the connections among these 
concepts.

A real-rooted polynomial $f(x)$, with roots 
$\alpha_1 \ge \alpha_2 \ge \cdots$, is said to 
\emph{interlace} a real-rooted polynomial $g(x)$, with 
roots $\beta_1 \ge \beta_2 \ge \cdots$, if
\[ \cdots \le \alpha_2 \le \beta_2 \le \alpha_1 \le
   \beta_1. \]
We then write $f(x) \preceq g(x)$. By convention, 
the zero polynomial interlaces and is interlaced by 
every real-rooted polynomial and nonzero constant 
polynomials interlace all polynomials of 
degree at most one. If two or more real-rooted 
polynomials with positive leading coefficients 
interlace (respectively, are interlaced by) a 
real-rooted polynomial $f(x)$, then so does their 
sum. We will write $\iI_n(f(x)) = x^n f(1/x)$ for 
$f(x) \in \RR_n[x]$. Then, for real-rooted 
polynomials $f(x), g(x) \in \RR_n[x]$ with 
nonnegative coefficients, $f(x) \preceq g(x) 
\Rightarrow \iI_n(g(x)) \preceq \iI_n(f(x))$. We 
will use these properties of interlacing 
frequently in Sections~\ref{sec:sd} 
and~\ref{sec:esd}; see~\cite[Section~7.8]{Bra15} 
for more information.  

A sequence $(f_0(x), f_1(x),\dots,f_n(x))$ of
real-rooted polynomials is called 
\emph{interlacing} if $f_i(x) \preceq f_j(x)$ 
for all $0 \le i < j \le n$. The properties of 
interlacing sequences in the following lemmas 
will be used in Sections~\ref{sec:sd} 
and~\ref{sec:esd}. We recall that, given a 
positive integer $r$, the $i$th Veronese 
$r$-section operator is defined on polynomials
or formal power series by the formula
\[ \sS^r_i \left( \, \sum_{n \ge 0} a_nx^n \right) 
   = \, \sum_{n \ge 0} a_{i+rn} x^n \]
for $i \in \{0, 1,\dots,r-1\}$. We have
\begin{equation} \label{eq:Sx}
\sS^r_i (x^j f(x)) = \begin{cases}
  \sS^r_{i-j} (f(x)), & \text{if $i \ge j$} \\
  x \sS^r_{r-j+i} (f(x)), & \text{if $i < j$} 
\end{cases} \end{equation}
for $i, j \in \{0, 1,\dots,r-1\}$.

\begin{lemma} \label{lem:trans}
{\rm (\cite[Proposition~3.3]{Wa92})}
Let $f_0(x), f_1(x),\dots,f_n(x) \in \RR[x]$ be 
nonzero real-rooted polynomials. If $f_{i-1}(x) 
\preceq f_i(x)$ for every $i \in [n]$ and $f_0(x) 
\preceq f_n(x)$, then $(f_0(x), 
f_1(x),\dots,f_n(x))$ is an interlacing sequence.
\end{lemma}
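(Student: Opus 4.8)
The plan is to translate everything into statements about the locations of roots and then chain inequalities. By standard reductions we may assume that $f_0(x), f_1(x),\dots,f_n(x)$ all have positive leading coefficient (replace $f_i(x)$ by $-f_i(x)$ where needed) and that they all have the same degree $d$: multiplying every $f_i(x)$ by $(x+N)^{d-\deg f_i(x)}$ for $N$ exceeding the absolute value of every root in sight yields polynomials of common degree $d = \max_i \deg f_i(x)$ while preserving each relation $f_{i-1}(x) \preceq f_i(x)$ and the relation $f_0(x) \preceq f_n(x)$, and the relation $\preceq$ among the new polynomials holds precisely when it holds among the old ones. Write $r^{(i)}_1 \ge r^{(i)}_2 \ge \cdots \ge r^{(i)}_d$ for the roots of $f_i(x)$, listed with multiplicity.

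In this notation, $f_{i-1}(x) \preceq f_i(x)$ is exactly the assertion that $r^{(i-1)}_k \le r^{(i)}_k$ for $1 \le k \le d$ and $r^{(i)}_{k+1} \le r^{(i-1)}_k$ for $1 \le k \le d-1$; likewise $f_i(x) \preceq f_j(x)$ is equivalent to $r^{(i)}_k \le r^{(j)}_k$ and $r^{(j)}_{k+1} \le r^{(i)}_k$ in the same ranges of $k$. So it is enough to verify these two families of inequalities for every pair $0 \le i < j \le n$.

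The first family is immediate from the hypothesis: each $f_{\ell-1}(x) \preceq f_\ell(x)$ gives $r^{(\ell-1)}_k \le r^{(\ell)}_k$, and concatenating these for $\ell = i+1,\dots,j$ gives $r^{(i)}_k \le r^{(j)}_k$. For the second I would invoke the extra hypothesis $f_0(x) \preceq f_n(x)$, which yields $r^{(n)}_{k+1} \le r^{(0)}_k$ for $1 \le k \le d-1$. Using the monotonicity of the $k$th root along the sequence (once with $j \le n$ to get $r^{(j)}_{k+1} \le r^{(n)}_{k+1}$, once with $0 \le i$ to get $r^{(0)}_k \le r^{(i)}_k$) we obtain
\[ r^{(j)}_{k+1} \ \le\ r^{(n)}_{k+1} \ \le\ r^{(0)}_k \ \le\ r^{(i)}_k , \]
which is the remaining inequality. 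Hence $f_i(x) \preceq f_j(x)$ for all $0 \le i < j \le n$, i.e. the sequence is interlacing.

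The whole argument is short once the setup is in place; the only delicate point is the opening reduction to polynomials of equal degree, equivalently keeping track of the conventional ``root at $-\infty$'' that appears whenever the degree jumps along the sequence, together with the degenerate cases involving constant polynomials, which are governed by the conventions for $\preceq$ recalled earlier in this section. I do not anticipate a substantive obstacle beyond this bookkeeping.
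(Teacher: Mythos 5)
The paper does not actually prove Lemma~\ref{lem:trans}; it is cited from Wagner~\cite[Proposition~3.3]{Wa92}, so there is no in-text proof to compare against. Your argument is correct and self-contained. The reduction to a common degree $d$ via multiplication by $(x+N)^{d-\deg f_i}$ with $N$ large is legitimate (note that the hypotheses force $\deg f_i \in \{d-1,d\}$ for all $i$, since $f_{i-1}\preceq f_i$ makes the degree sequence non-decreasing with jumps of at most one, while $f_0 \preceq f_n$ caps the total increase at one), and it is bidirectionally compatible with $\preceq$ because the added roots at $-N$ sit strictly below every other root in sight. After that, writing $f_i \preceq f_j$ as the two families of inequalities $r^{(i)}_k \le r^{(j)}_k$ and $r^{(j)}_{k+1} \le r^{(i)}_k$, obtaining the first by concatenating the adjacent interlacings, and obtaining the second by routing through $r^{(j)}_{k+1} \le r^{(n)}_{k+1} \le r^{(0)}_k \le r^{(i)}_k$ using the global hypothesis $f_0 \preceq f_n$, is exactly the right chain of inequalities. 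The one place you leave slightly informal is the handling of constant $f_i$, which your reduction in fact takes care of automatically (a nonzero constant $f_i$ becomes $(x+N)^d f_i$ with $d\le 1$, and the convention on constants is then subsumed by the root picture), so there is no real gap. This is an elementary, direct proof of the cited fact and would be a reasonable substitute for the reference.
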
 
\begin{lemma} \label{lem:zhang} 
Let $f(x) \in \RR[x]$ be a polynomial with 
nonnegative coefficients such that 
\[ (\sS^r_{r-j} (f(x)))_{1 \le j \le r} := 
   (\sS^r_{r-1} (f(x)),\dots,\sS^r_1 (f(x)), 
    \sS^r_0 (f(x))) \]
is an interlacing sequence of real-rooted 
polynomials.

\begin{itemize}
\itemsep=0pt
\item[{\rm (a)}]
If $g(x) = x^m f(x)$ for some $m \in \NN$, then 
$(\sS^r_{r-j} (g(x)))_{1 \le j \le r}$ is also an 
interlacing sequence of real-rooted polynomials. 

\item[{\rm (b)}] 
{\rm (cf. \cite[Theorem~2.2]{Zh19})} If $g(x) = 
(1 + x + x^2 + \cdots + x^t) f(x)$ for some $t 
\in \{0, 1,\dots,r-1\}$, then $(\sS^r_{r-j} 
(g(x)))_{1 \le j \le r}$ is also an interlacing 
sequence of real-rooted polynomials.
\end{itemize}
\end{lemma}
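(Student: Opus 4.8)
The plan is to prove both parts by reducing to the defining action of the Veronese operators on monomials, via Equation~(\ref{eq:Sx}), together with the elementary closure properties of interlacing recorded earlier (if several real-rooted polynomials with positive leading coefficients interlace, or are interlaced by, a common polynomial, then so does their sum; and the reversal $\iI_n$ reverses interlacing). For part (a) I would write $g(x) = x^m f(x)$ and first reduce to the case $m = 1$ by iteration, since $x^m = x \cdot x^{m-1}$ and the hypothesis is exactly the conclusion. For $m=1$, apply the first case of (\ref{eq:Sx}) with $j=1$: for $i \in \{1,\dots,r-1\}$ we get $\sS^r_i(xf(x)) = \sS^r_{i-1}(f(x))$, while for $i = 0$ we get $\sS^r_0(xf(x)) = x\,\sS^r_{r-1}(f(x))$. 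Reindexing by $j$ so that $\sS^r_{r-j}$ is the $j$th entry, the new sequence $(\sS^r_{r-j}(g(x)))_{1 \le j \le r}$ is, up to the very last entry, just a shift of the old sequence $(\sS^r_{r-j}(f(x)))_{1 \le j \le r}$, and the last entry is $x$ times the first entry of the old sequence. So the claim becomes: if $(f_1,\dots,f_r)$ is interlacing then so is $(f_2, f_3,\dots,f_r, x f_1)$. Any truncation of an interlacing sequence is interlacing, so $(f_2,\dots,f_r)$ is interlacing; and multiplication by $x$ preserves real-rootedness and the interlacing relations $f_j \preceq x f_1$ (from $f_j \preceq f_r \preceq x f_1$, using that $g \preceq x g$ for any real-rooted $g$ with nonnegative coefficients, hence $f_r \preceq x f_r$, and then transitivity via Lemma~\ref{lem:trans} after checking consecutive and endpoint interlacings). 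This gives part (a).

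For part (b), the key observation is that $1 + x + \cdots + x^t = \sum_{s=0}^t x^s$, so by linearity of $\sS^r_i$ we have $\sS^r_i(g(x)) = \sum_{s=0}^t \sS^r_i(x^s f(x))$, and each summand is handled by (\ref{eq:Sx}): it equals $\sS^r_{i-s}(f(x))$ when $i \ge s$ and $x\,\sS^r_{r-s+i}(f(x))$ when $i < s$. Thus, for the $j$th entry ($i = r-j$), the polynomial $\sS^r_{r-j}(g(x))$ is a sum of $t+1$ consecutive entries of the old sequence, some of them possibly multiplied by $x$ because the index wraps around mod $r$. The heart of the argument is to show that this wrapped window of $t+1$ consecutive entries, with the wrapped ones multiplied by $x$, still interlaces (and is interlaced by) a common polynomial as $j$ ranges over $1,\dots,r$, so that Lemma~\ref{lem:trans} applies to the resulting sequence. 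Concretely I would show that, writing the old sequence as $(f_1,\dots,f_r)$, each new entry has the form $x\,f_{a} + x\,f_{a+1} + \cdots + x\,f_{r} + f_1 + \cdots + f_{b}$ for appropriate indices (with one of the two blocks possibly empty), and that consecutive new entries, as well as the first and last, interlace — this uses repeatedly that within an interlacing sequence every partial sum with positive leading coefficients interlaces any later entry and is interlaced by any earlier one, plus the fact that $f \preceq xf$ for such $f$.

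The main obstacle will be the bookkeeping in part (b): precisely identifying which entries of the old sequence, and which of them carry a factor $x$, appear in each new entry, and then verifying the consecutive-interlacing and endpoint-interlacing hypotheses of Lemma~\ref{lem:trans} uniformly across all $r$ new entries, including the wrap-around case where the window straddles the index $r \leftrightarrow 1$ boundary. I expect the cleanest route is to treat the "sum of a suffix times $x$ plus a prefix" structure abstractly: prove a small auxiliary claim that if $(f_1,\dots,f_r)$ is interlacing with nonnegative-coefficient, positive-leading-coefficient entries, then for any $1 \le b \le r$ and $1 \le a \le r+1$ the windowed sums $x f_a + \cdots + x f_r + f_1 + \cdots + f_b$ (interpreted cyclically) form an interlacing sequence as the window slides — this isolates all the interlacing gymnastics into one lemma and makes the application to $g(x)$ a matter of matching indices. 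Part (a) is then the special case $t=0$ combined with the shift, and part (b) follows immediately; this also matches the cited precedent \cite[Theorem~2.2]{Zh19}.
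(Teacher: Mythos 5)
Your treatment of part (a) follows the paper's route exactly: reduce to $m=1$ via iteration, apply Equation~(\ref{eq:Sx}) to see that the new sequence is $(f_2,\dots,f_r,xf_1)$ where $f_j := \sS^r_{r-j}(f(x))$, and then observe this is interlacing. The paper dismisses this last step as an ``obvious fact,'' whereas you supply the justification: truncation preserves interlacing, $f \preceq g$ for real-rooted polynomials with nonnegative coefficients (hence nonpositive roots) gives $g \preceq xf$, and Lemma~\ref{lem:trans} then assembles the sequence from the consecutive and endpoint interlacings. That is correct, and it is a useful unpacking of the paper's one-line claim.

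For part (b) you diverge from the paper, which does not prove the statement at all: the ``cf.\ \cite[Theorem~2.2]{Zh19}'' in the statement and the phrase ``we only need to prove part (a)'' at the start of the proof indicate that part (b) is taken as known from Zhang's work. Your proposal to prove it directly via cyclic windowed sums is in the spirit of Zhang's argument and is a sound strategy, but the explicit window you wrote down is miscomputed. Applying Equation~(\ref{eq:Sx}) term by term to $g(x) = \sum_{s=0}^t x^s f(x)$ with $i = r-j$ gives
\[
  \sS^r_{r-j}(g(x)) \;=\; \bigl(f_j + f_{j+1} + \cdots + f_{\min(j+t,\,r)}\bigr) \;+\; x\bigl(f_1 + f_2 + \cdots + f_{j+t-r}\bigr),
\]
with the second block empty when $j + t \le r$. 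That is, the unwrapped block starts at index $j$ (not at $f_1$), and it is the \emph{wrapped-around prefix} $f_1,\dots,f_{j+t-r}$ that carries the factor $x$ --- not a suffix $f_a,\dots,f_r$ as in the form $xf_a + \cdots + xf_r + f_1 + \cdots + f_b$ you wrote. Beyond this bookkeeping slip, you yourself flag that the consecutive and endpoint verifications for Lemma~\ref{lem:trans} across all $r$ windows are the real work and you do not carry them out, so part (b) remains a plan rather than a proof. That said, nothing in the strategy is wrong, and if you correct the form of the window the auxiliary ``sliding window'' lemma you propose can be proved by exactly the sum-of-interlacers and $f \preceq xf$ manipulations you already invoked for part (a).
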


\begin{proof}
We only need to prove part (a) and for that, we
may assume that $m=1$. Then, the result is 
equivalent to the obvious fact that $(\sS^r_{r-2} 
(f(x)),\dots,\sS^r_0 (f(x)), x \sS^r_{r-1} 
(f(x)))$ is an interlacing sequence of real-rooted 
polynomials.
\end{proof}

\begin{lemma} \label{lem:new}
Let $f_1(x), f_2(x),\dots,f_n(x) \in \RR[x]$ be 
polynomials with nonnegative coefficients and let 
$g_0(x), g_1(x),\dots,g_n(x)$ be defined by the 
equation 

\begin{equation} \label{eq:new} \begin{pmatrix} 
   g_0(x) \\ g_1(x) \\ \vdots \\ g_n(x)
	 \end{pmatrix} = \begin{pmatrix} 
	 0 & 1 & \cdots & 1 \\ 
   x & 1 & \cdots & 1 \\ 
	 \vdots & \vdots & \ddots & \vdots \\ 
	 x & x & \cdots & 1 \\ x & x & \cdots & x
	\end{pmatrix} \begin{pmatrix} 
  f_1(x) \\ f_2(x) \\ \vdots \\ f_n(x) 
	\end{pmatrix}. \end{equation}
If $(f_1(x), f_2(x),\dots,f_n(x))$ is an 
interlacing sequence of real-rooted polynomials,
then so is the sequence $(g_0(x), 
g_1(x),\dots,g_n(x))$.
\end{lemma}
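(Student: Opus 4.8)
The plan is to set up an induction that peels off one row of the matrix at a time, or — what amounts to the same thing — to recognize the vector $(g_0,\dots,g_n)$ as built from partial sums of the $f_i$ twisted by powers of $x$, and then to apply Lemma~\ref{lem:trans} together with the closure of interlacing under sums. Concretely, writing out \eqref{eq:new} row by row gives
\[
g_j(x) = x\bigl(f_1(x) + \cdots + f_j(x)\bigr) + \bigl(f_{j+1}(x) + \cdots + f_n(x)\bigr)
\]
for $0 \le j \le n$, with the convention that the first (respectively second) group is empty when $j = 0$ (respectively $j = n$). In particular each $g_j(x)$ has nonnegative coefficients, and $g_{j}(x) - g_{j-1}(x) = (x-1) f_j(x)$. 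The two things to verify are: (i) each $g_j(x)$ is real-rooted; and (ii) consecutive ones interlace, $g_{j-1}(x) \preceq g_j(x)$, together with the "wrap-around" relation $g_0(x) \preceq g_n(x)$, so that Lemma~\ref{lem:trans} delivers the full interlacing sequence.

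For step (i), I would argue that since $(f_1,\dots,f_n)$ is an interlacing sequence, so is $(x f_1,\dots, x f_j, f_{j+1},\dots,f_n)$ — multiplying an initial segment of an interlacing sequence by $x$ preserves the interlacing property, because $x f_i \preceq x f_{i'}$ whenever $f_i \preceq f_{i'}$, and $f_i \preceq x f_{i'}$ when $i \le i'$ (the new root at $0$ sits at the top), while $x f_i \preceq f_{i'}$ fails in general but is not needed since the $x$-multiplied ones all come first. Then $g_j$ is the sum of the members of this interlacing sequence, and a sum of polynomials forming an interlacing sequence, all with positive leading coefficients, is real-rooted; this is the standard fact recalled in Section~\ref{subsec:polys}. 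The same reasoning shows simultaneously that $g_j$ is interlaced by each $x f_i$ ($i \le j$) and each $f_i$ ($i > j$), and interlaces none of them in the wrong direction — information I will reuse below.

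For step (ii), the key computation is that $g_{j-1}$ and $g_j$ differ only in whether the single term $f_j$ is multiplied by $x$ or not; so $(g_{j-1}, g_j)$ is, up to rearrangement, a sum over the interlacing sequence obtained by sharing all the common terms and treating $f_j$ versus $x f_j$. Since $f_j \preceq x f_j$ and both are compatible (in the interlacing partial order) with every other term appearing — the $x f_i$ with $i < j$, the $f_i$ with $i > j$ — the two polynomials $g_{j-1} = (\text{common}) + f_j$ and $g_j = (\text{common}) + x f_j$ are sums of two interlacing sequences that differ in one entry moving up, whence $g_{j-1} \preceq g_j$ by the monotonicity properties of sums of interlacing sequences (again from Section~\ref{subsec:polys}). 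The wrap-around $g_0 \preceq g_n$ is the extreme case $j$ ranging over all indices at once: $g_0 = f_1 + \cdots + f_n$ and $g_n = x(f_1 + \cdots + f_n) = x\, g_0$, and $g_0 \preceq x g_0$ holds for any real-rooted $g_0$ with nonnegative coefficients. With $g_{j-1} \preceq g_j$ for all $j \in [n]$ and $g_0 \preceq g_n$ in hand, Lemma~\ref{lem:trans} finishes the proof.

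The main obstacle I anticipate is making the informal phrase "sum of two interlacing sequences that differ in one entry moving up" rigorous — i.e., pinning down exactly which pairwise interlacing relations among $\{x f_1,\dots,x f_j, f_j, f_{j+1},\dots,f_n\}$ hold. The delicate point is that $f_j$ and $x f_i$ for $i < j$ need not be comparable in general, so one cannot just invoke "a sum of an interlacing sequence." The clean fix is to avoid forming one big interlacing family and instead argue incrementally: build $g_j$ from $g_{j-1}$ by replacing $f_j$ with $x f_j$ one step at a time, using only the two-term facts $h + f_j \preceq h + x f_j$ (valid whenever $h, f_j$ are real-rooted with nonnegative coefficients and $h \preceq f_j$ or $f_j \preceq h$, after checking the relevant root-interlacing inequalities directly) and $f_j \preceq h \Rightarrow f_j \preceq h + (\text{stuff interlaced by } h)$. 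This reduces everything to elementary manipulations with the defining root inequalities, which I would spell out in a short lemma or inline, rather than to a black-box appeal.
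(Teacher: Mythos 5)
Your strategy is genuinely different from the paper's (which simply invokes Br\"and\'en's matrix-preservation theorem \cite[Theorem~7.8.5]{Bra15} after a standard check of the $2\times 2$ submatrices). Reduced to its skeleton, your plan is sound: write out $g_j = x(f_1+\cdots+f_j) + (f_{j+1}+\cdots+f_n)$, establish real-rootedness of each $g_j$, verify $g_{j-1}\preceq g_j$ and $g_0\preceq g_n$, then invoke Lemma~\ref{lem:trans}. However, there are two problems.

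First, a directional error. You claim that $(xf_1,\dots,xf_j,f_{j+1},\dots,f_n)$ is interlacing because ``$f_i \preceq x f_{i'}$ when $i\le i'$.'' This inequality goes the wrong way. For real-rooted $f,g$ with nonnegative coefficients (so all roots $\le 0$), the statement $f\preceq xg$ holds precisely when $g\preceq f$, since $0$ is then the \emph{largest} root of $xg$ and the remaining roots of $xg$ are those of $g$, which sit inside the roots of $f$. A concrete counterexample to your claim: $f_1=1+x$, $f_2=1+2x$ satisfy $f_1\preceq f_2$, but $xf_2$ has roots $\{0,-1/2\}$ and $f_1$ has root $-1$, and $-1\not\ge -1/2$, so $f_1\not\preceq xf_2$. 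The correct interlacing order is $(f_{j+1},\dots,f_n,xf_1,\dots,xf_j)$, since $f_n\preceq xf_1$. Indeed, you yourself noticed that $xf_i\preceq f_{i'}$ ``fails in general'' --- but with the $x$-multiplied terms listed first, that relation is exactly what your ordering requires, so your own remark already shows the sequence as you wrote it is not interlacing. With the order fixed, the conclusion that each $g_j$ is real-rooted does follow, since a sum of an interlacing sequence of real-rooted polynomials with positive leading coefficients is real-rooted.

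Second, and more seriously, the step $g_{j-1}\preceq g_j$ is not established, and the remedies you sketch do not supply the missing argument. Setting $h_0=f_j$, $h_1=f_{j+1},\dots,h_{n-j}=f_n$, $h_{n-j+1}=xf_1,\dots,h_n=xf_j$, one has an interlacing sequence $(h_0,\dots,h_n)$ with $g_{j-1}=h_0+\cdots+h_{n-1}$ and $g_j=h_1+\cdots+h_n$, so what you need is: shifted window-sums of an interlacing sequence interlace. This is not one of the facts recalled in Section~\ref{subsec:polys}, and it is not obtainable by naively combining ``$h_0+h_1\preceq h_1$'' with ``$h_1\preceq h_1+h_2$'' (interlacing is not transitive at fixed degree). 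Your proposed workaround --- incremental replacement via facts like ``$h+f_j\preceq h+xf_j$ whenever $h$ and $f_j$ are comparable'' --- is precisely the kind of claim that needs proof and is not generically true; verifying the version you'd actually need amounts to re-deriving, in this special case, the content of the theorem the paper cites. In short, the place where the paper's appeal to Br\"and\'en's $2\times 2$-minor criterion does all the work is the place where your argument has a hole. To make your elementary route rigorous you would either need to prove the shifted-window-sum lemma from scratch (which is a real piece of work) or fall back on the same matrix result the paper uses.
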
 

\begin{proof}
This follows by a standard application of 
\cite[Theorem~7.8.5]{Bra15}. Indeed, one needs to 
verify that every $2 \times 2$ submatrix of the 
$(n+1) \times n$ matrix on the right-hand side of
Equation~(\ref{eq:new}) preserves interlacing
pairs of polynomials with nonnegative 
coefficients. This has already been 
verified in the literature, for instance in the
proofs of \cite[Corollary~7.8.7]{Bra15} and 
\cite[Theorem~2.2]{Zh19}.
\end{proof}

\subsection{Simplicial complexes and 
            triangulations} 
\label{subsec:complexes}

We assume familiarity with basic notions about 
simplicial complexes and their triangulations 
\cite{Bj95, MW17, StaCCA}. All simplicial 
complexes considered 
here will be finite. We denote by $|V|$ the
cardinality of a finite set $V$ and by $2^V$ the 
collection of all subsets of $V$, which is the 
abstract simplex with vertex set $V$. 

The \emph{$h$-polynomial} of an 
$(n-1)$-dimensional simplicial complex $\Delta$ 
is defined by the formula
\begin{equation}
\label{eq:hdef}
h(\Delta, x) = \sum_{i=0}^n f_{i-1} (\Delta) x^i 
(1-x)^{n-i}, 
\end{equation}
where $f_i(\Delta)$ is the number of $i$-dimensional 
faces of $\Delta$. The sequence $(h_0(\Delta), 
h_1(\Delta),\dots,h_n(\Delta))$ is the 
\emph{$h$-vector} of $\Delta$. 

By the term \emph{triangulation} of a simplicial 
complex $\Gamma$ we will mean a geometric 
triangulation. Thus, a simplicial complex $\Delta$ 
is a triangulation of $\Gamma$ if there exists a 
geometric realization $L$ of $\Delta$ which 
geometrically subdivides a geometric realization 
$K$ of $\Gamma$. The \emph{carrier} of a face 
$G \in \Delta$ is the smallest face 
$F \in \Gamma$ for which the face of $K$ 
corresponding to $F$ contains the face of $L$ 
corresponding to $G$. The \emph{restriction} of 
$\Delta$ to a face $F \in \Gamma$ is the 
subcomplex of $\Delta$ which consists of those 
faces whose carrier is contained in $F$. This 
complex is a triangulation of the simplex $2^F$
and is denoted by $\Delta_F$. A simplicial 
complex $\Delta$ is called \emph{flag} if every
minimal nonface of $\Delta$ has two elements.

The \emph{local $h$-polynomial} of a 
triangulation $\Gamma$ of a simplex $2^V$ is 
defined by Equation~(\ref{eq:localh-def}). By the 
principle of inclusion-exclusion, 
\begin{equation} \label{eq:h-localh}
  h (\Gamma, x) \, = \sum_{F \subseteq V} 
  \ell_F (\Gamma_F, x).
\end{equation}
The polynomial $\ell_V (\Gamma, x)$ is symmetric, 
with center of symmetry $|V|/2$, and has 
nonnegative coefficients (these properties hold,
more generally, for quasi-geometric simplicial 
subdivisions $\Gamma$ of $2^V$); see \cite{Sta92} 
\cite[Section~III.10]{StaCCA} for more information 
about this important concept. 

\medskip
\noindent
\textbf{Barycentric and edgewise subdivisions}. 
Let $\Gamma$ be an $(n-1)$-dimensional simplicial 
complex with vertex set $V(\Gamma)$ and $r$ be a 
positive integer. The \emph{barycentric subdivision} 
of $\Gamma$, denoted by $\sd(\Gamma)$, is 
defined as the simplicial complex of all chains in 
the poset of nonempty faces of $\Gamma$. The edgewise 
subdivision depends on $r$ and a linear ordering of 
$V(\Gamma)$ (although its face vector is independent 
of the latter). Given such an ordering 
$v_1, v_2,\dots,v_m$, we denote by $V_r(\Gamma)$ the 
set of maps $f: V(\Gamma) \to \NN$ such that 
$\supp(f) \in \Gamma$ and $f(v_1) + f(v_2) + \cdots + 
f(v_m) = r$, where $\supp(f)$ is the set of all 
$v \in V(\Gamma)$ for which $f(v) \ne 0$. For $f \in 
V_r(\Gamma)$, we let $\iota(f): V(\Gamma) \to \NN$ be 
the map defined by setting $\iota(f)(v_j) = f(v_1) + 
f(v_2) + \cdots + f(v_j)$ for $j \in \{1, 2,\dots,m\}$. 
The \emph{$r$-fold edgewise subdivision} of $\Gamma$, 
denoted by $\esd_r(\Gamma)$, is the simplicial complex  
on the vertex set $V_r(\Gamma)$ of which a set 
$E \subseteq V_r(\Gamma)$ is a face if the following 
two conditions are satisfied:

\begin{itemize}
\itemsep=0pt
\item[$\bullet$]
$\bigcup_{f \in E} \, \supp(f) \in \Gamma$ and

\item[$\bullet$]
$\iota(f) - \iota(g) \in \{0, 1\}^{V(\Gamma)}$, or 
$\iota(g) - \iota(f) \in \{0, 1\}^{V(\Gamma)}$, for 
all $f, g \in E$.
\end{itemize}

The simplicial complexes $\sd(\Gamma)$ and 
$\esd_r(\Gamma)$ can be realized as triangulations 
of $\Gamma$; see \cite{Ath16b} 
\cite[Section~9.3]{Bj95} \cite[Section~3.2]{MW17}
and the references given there. 

\medskip
\noindent
\textbf{Uniform triangulations}. A triangulation 
$\Delta$ of an $(n-1)$-dimensional simplicial 
complex $\Gamma$ is called \emph{uniform} 
\cite{Ath22} if for all $0 \le i \le j \le n$ and 
for every $(j-1)$-dimensional face $F \in \Gamma$,
the number of $(i-1)$-dimensional faces of the 
restriction $\Delta_F$ depends only on $i$ and $j$.
Denoting this number by $f(i,j)$, we say that the
triangular array $\fF = (f(i,j))_{0 \le i \le j \le 
n}$ is the \emph{$f$-triangle} associated to $\Delta$ 
and that $\Delta$ is an \emph{$\fF$-uniform} 
triangulation of $\Gamma$. Barycentric and $r$-fold
edgewise subdivisions are prototypical examples of
uniform triangulations. Clearly, the restriction of 
an $\fF$-uniform triangulation of $\Gamma$ to any 
subcomplex of $\Gamma$ is an $\fF$-uniform 
triangulation of that subcomplex.

Following~\cite{Ath22}, we denote by $\sigma_n$ 
the $(n-1)$-dimensional abstract simplex. One of the
main results of~\cite{Ath22} shows that for $0 \le 
k \le m \le n$, there exist polynomials 
$p_{\fF,m,k}(x)$ which depend only on $\fF$ and 
$m,k$ such that 
\begin{equation} \label{eq:h-uniform}
h(\Delta,x) = \sum_{k=0}^m h_k(\Gamma)  
                        p_{\fF,m,k}(x) 
\end{equation}
for every $\fF$-uniform triangulation $\Delta$ 
of any $(m-1)$-dimensional simplicial complex 
$\Gamma$. Moreover, the polynomials $p_{\fF,m,k}(x)$ 
have nonnegative coefficients, have the property 
that 
\begin{equation} \label{eq:pnk-recip}
x^m p_{\fF,m,k}(1/x) = p_{\fF,m,m-k}(x)
\end{equation}
for $k \in \{0, 1,\dots,m\}$ and satisfy the 
recurrence 
\begin{equation} \label{eq:pnk-rec}
p_{\fF,m,k}(x) = p_{\fF,m,k-1}(x) + (x-1) 
p_{\fF,m-1,k-1}(x)
\end{equation}
for $1 \le k \le m \le n$; see 
\cite[Section~6]{Ath22}. As a result of 
Equations~(\ref{eq:h-uniform}) 
and~(\ref{eq:pnk-rec}) we have $p_{\fF,m,0}(x) = 
h_\fF(\sigma_m,x)$, where $h_\fF(\sigma_m,x)$ 
stands for the $h$-polynomial of any 
$\fF$-uniform triangulation of $\sigma_m$, and  
\begin{equation} \label{eq:pnk}
p_{\fF,m,k}(x) = \sum_{i=0}^k {k \choose i} 
                 (x-1)^i p_{\fF,m-i,0}(x)
							 = \sum_{i=0}^k {k \choose i} 
                 (x-1)^i h_\fF(\sigma_{m-i},x)
\end{equation}
for every $k \in \{0, 1,\dots,m\}$.

Following~\cite{Ath22} again, for $m \in 
\{0, 1,\dots,n\}$ we denote by $\ell_\fF(\sigma_m,x)$ 
the local $h$-polynomial of any $\fF$-uniform 
triangulation of $\sigma_m$. The polynomials
\begin{equation} \label{eq:lnk-def}
\ell_{\fF,m,k}(x) = \sum_{i=0}^k (-1)^i {k \choose i}
                      h_\fF(\sigma_{m-i},x)
									= \sum_{i=0}^k (-1)^i {k \choose i}
                      p_{\fF,m-i,0}(x),
\end{equation}
introduced for $k \in \{0, 1,\dots,m\}$
in~\cite[Section~5.3]{Ath23+}, play an important role
in this paper. They interpolate between $\ell_{\fF,m,0}
(x) = p_{\fF,m,0}(x) = h_\fF(\sigma_m,x)$ and 
$\ell_{\fF,m,m}(x) = \ell_\fF(\sigma_m,x)$. They have
nonnegative coefficients, since 
\begin{equation} \label{eq:lnk-alt}
\ell_{\fF,m,k}(x) = \sum_{i=0}^{m-k} {m-k \choose i} 
                    \ell_\fF(\sigma_{m-i},x)
\end{equation}
by \cite[Remark~6.2~(c)]{Ath23+}.

\section{Uniform triangulations}
\label{sec:uniform}

This section considers $\fF$-uniform triangulations, 
where $\fF$ is an arbitrary $f$-triangle of size $n$.
For $m \in \{0, 1,\dots,n\}$, we introduce the 
polynomials 
\begin{equation} \label{eq:lnkj-def}
\ell_{\fF,m,k,j}(x) = \sum_{i=0}^k (-1)^i {k \choose i}
                      p_{\fF,m-i,j}(x)
\end{equation}
for $k, j \in \{0, 1,\dots,m\}$ with $k + j \le m$.
The following statement is the main result of this 
section.
\begin{theorem} \label{thm:uniform} 
Let $\Gamma$ be any triangulation of the 
$(n-1)$-dimensional simplex $2^V$. Then, there exist 
nonnegative integers $c_{k,j}(\Gamma)$ for $k, j \in 
\{0, 1,\dots,n\}$ with $k + j \le n$ such that 
\begin{equation} \label{eq:local-uniform}
\ell_V(\Delta,x) = \sum_{k=0}^n 
\sum_{j=0}^{n-k} c_{k,j}(\Gamma) \ell_{\fF,n,k,j}(x)
\end{equation}
for every $\fF$-uniform triangulation $\Delta$ of 
$\Gamma$. Specifically,
\begin{equation} \label{eq:ckj}
c_{k,j}(\Gamma) = \sum_{F \subseteq V: \, |F|=n-k} 
                  [x^j] \ell_F(\Gamma_F,x) 
\end{equation}
for $k, j \in \{0, 1,\dots,n\}$ with $k + j \le n$.
\end{theorem}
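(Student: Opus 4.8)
The plan is to establish~(\ref{eq:local-uniform}) and~(\ref{eq:ckj}) by a direct computation, starting from the defining formula~(\ref{eq:localh-def}) for $\ell_V(\Delta,x)$ and using only the uniform-triangulation identity~(\ref{eq:h-uniform}), the inclusion--exclusion relation~(\ref{eq:h-localh}), and the definition~(\ref{eq:lnkj-def}) of $\ell_{\fF,n,k,j}(x)$. No induction is needed.

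First I would note that for each $F \subseteq V$ the restriction $\Delta_F$ is an $\fF$-uniform triangulation of $\Gamma_F$, and that $\Gamma_F$ is a triangulation of the $(|F|-1)$-dimensional simplex $2^F$; hence~(\ref{eq:h-uniform}) applies with $m=|F|$ and gives $h(\Delta_F,x) = \sum_{j \ge 0} h_j(\Gamma_F)\, p_{\fF,|F|,j}(x)$. Since $\Gamma_F$ triangulates the simplex $2^F$, relation~(\ref{eq:h-localh}) yields $h_j(\Gamma_F) = \sum_{G \subseteq F}[x^j]\ell_G(\Gamma_G,x)$. Substituting into~(\ref{eq:localh-def}) and interchanging the order of summation so as to sum first over $G \subseteq V$ and then over the faces $F$ with $G \subseteq F \subseteq V$, I would obtain
\[
\ell_V(\Delta,x) \;=\; \sum_{G \subseteq V}\ \sum_{j \ge 0} [x^j]\ell_G(\Gamma_G,x)\, \sum_{F:\, G \subseteq F \subseteq V} (-1)^{n-|F|}\, p_{\fF,|F|,j}(x).
\]

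The key step is the evaluation of the innermost sum. Grouping the faces $F$ with $G \subseteq F \subseteq V$ by cardinality and setting $i = n-|F|$ (there being $\binom{n-|G|}{i}$ such faces with $|F|=n-i$), this sum becomes $\sum_{i=0}^{n-|G|} (-1)^i \binom{n-|G|}{i}\, p_{\fF,n-i,j}(x)$, which is exactly $\ell_{\fF,n,n-|G|,j}(x)$ by~(\ref{eq:lnkj-def}). Here $[x^j]\ell_G(\Gamma_G,x)=0$ unless $0 \le j \le |G|$ (the local $h$-polynomial of a triangulation of $2^G$ having degree at most $|G|$), so only polynomials $\ell_{\fF,n,n-|G|,j}(x)$ with $(n-|G|)+j \le n$ contribute, and all of these are legitimately defined in~(\ref{eq:lnkj-def}).

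Finally I would reorganize the outer sum by setting $k = n-|G|$ and reading off the coefficient of each $\ell_{\fF,n,k,j}(x)$, which is precisely $c_{k,j}(\Gamma) = \sum_{F \subseteq V:\, |F|=n-k}[x^j]\ell_F(\Gamma_F,x)$; this yields~(\ref{eq:local-uniform}) together with~(\ref{eq:ckj}). Nonnegativity and integrality of the $c_{k,j}(\Gamma)$ are then immediate, since each $\ell_F(\Gamma_F,x)$ is the local $h$-polynomial of a triangulation of the simplex $2^F$ and therefore has nonnegative integer coefficients. I do not anticipate a genuine obstacle here: the computation is routine, and the only points that demand care are checking that each $\Delta_F$ really is an $\fF$-uniform triangulation of the simplicial complex $\Gamma_F$ triangulating $2^F$ (so that~(\ref{eq:h-uniform}) applies with $m=|F|$), and keeping track of the index ranges, in particular that the terms with $j \notin \{0,1,\dots,n-k\}$ drop out.
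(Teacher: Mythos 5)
Your proposal is correct and takes essentially the same route as the paper's own proof: expand $\ell_V(\Delta,x)$ by its definition, substitute~(\ref{eq:h-uniform}) for each $h(\Delta_F,x)$, then use~(\ref{eq:h-localh}) for $h_j(\Gamma_F)$, interchange the order of summation, and recognize the resulting binomial sum as $\ell_{\fF,n,k,j}(x)$ via~(\ref{eq:lnkj-def}). The only difference is a swap of the names of the two bound face variables; all the index bookkeeping, including the observation that terms with $k+j>n$ vanish and the source of nonnegativity, matches the paper.
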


\begin{proof}
Since $\Delta_G$ is an $\fF$-uniform triangulation
of $\Gamma_G$ for every $G \subseteq V$, applying 
Equations~(\ref{eq:h-localh}) 
and~(\ref{eq:h-uniform}) we get 

\begin{align*}
\ell_V(\Delta,x) & = \sum_{G \subseteq V} 
     (-1)^{n - |G|} \, h (\Delta_G, x) \ = 
		 \sum_{G \subseteq V} (-1)^{n - |G|} 
     \sum_{j=0}^{|G|} h_j(\Gamma_G) p_{\fF,|G|,j}(x) 
		\\ & = \sum_{0 \le j \le i \le n} (-1)^{n-i} 
		       p_{\fF,i,j}(x)
    \sum_{G \subseteq V: \, |G|=i} h_j(\Gamma_G).  
\end{align*}

\medskip
\noindent
We now use Equation~(\ref{eq:h-localh}) to express 
$h_j(\Gamma_G)$ as a sum of local $h$-vector entries 
of restrictions of $\Gamma$ to faces $F \subseteq G$
of $2^V$ and change the order of summation to get 

\begin{align*}
\ell_V(\Delta,x) & = 
\sum_{0 \le j \le i \le n} (-1)^{n-i} p_{\fF,i,j}(x)
\sum_{F \subseteq G \subseteq V: \, |G|=i} [x^j] 
      \ell_F(\Gamma_F,x) \\
			& = \sum_{F \subseteq V} \sum_{j=0}^{|F|} \,
			[x^j] \ell_F(\Gamma_F,x) 
			\sum_{i=j}^n \, (-1)^{n-i} 
			{n-|F| \choose n-i} p_{\fF,i,j}(x).
\end{align*}

\medskip
\noindent
Setting $|F| = n-k$, the resulting expression can be
rewritten as 

\begin{align*}
\ell_V(\Delta,x) & = 
\sum_{0 \le j \le n-k \le n} \,
\sum_{F \subseteq V: \, |F|=n-k} [x^j] 
                                 \ell_F(\Gamma_F,x) 
			\sum_{i=n-k}^n \, (-1)^{n-i} 
			{k \choose n-i} p_{\fF,i,j}(x) \\ & = 
\sum_{0 \le j \le n-k \le n} \left( \,
\sum_{F \subseteq V: \, |F|=n-k} [x^j] 
                     \ell_F(\Gamma_F,x) \right) 
			\sum_{i=0}^k \, (-1)^i {k \choose i} 
			p_{\fF,n-i,j}(x) \\ & = 
			\sum_{0 \le j \le n-k \le n} \left( \,
\sum_{F \subseteq V: \, |F|=n-k} [x^j] 
        \ell_F(\Gamma_F,x) \right) 
				\ell_{\fF,n,k,j}(x)
\end{align*}

\medskip
\noindent
and the proof follows.
\end{proof}

The following statement collects the main properties 
of the polynomials $\ell_{\fF,m,k,j}(x)$.
\begin{proposition} \label{prop:lnkj} 
Let $m \in \NN$ with $m \le n$.
\begin{itemize}
\itemsep=0pt
\item[(a)] 
The polynomial $\ell_{\fF,m,k,j}(x)$ has 
nonnegative integer coefficients for all $k, j 
\in \{0, 1,\dots,m\}$ with $k + j \le m$.

\item[(b)] 
We have $x^m \ell_{\fF,m,k,j}(1/x) = 
\ell_{\fF,m,k,m-k-j}(x)$ for all $k, j \in 
\{0, 1,\dots,m\}$ with $k + j \le m$.

\item[(c)] 
We have $\ell_{\fF,m,k,0}(x) = \ell_{\fF,m,k}(x)$ 
for every $k \in \{0, 1,\dots,m\}$. 

\item[(d)] 
We have $\ell_{\fF,m,0,j}(x) = p_{\fF,m,j}(x)$ 
for every $j \in \{0, 1,\dots,m\}$.

\item[(e)] 
The recurrence 
\begin{equation*} %%\label{eq:lnkj-rec1}
\ell_{\fF,m,k,j}(x) = \ell_{\fF,m,k,j-1}(x) + (x-1) 
\ell_{\fF,m-1,k,j-1}(x)
\end{equation*}
holds for all $k \in \{0, 1,\dots,m\}$ and $j \in 
\{1, 2,\dots,m-k\}$.

\item[(f)] 
The recurrence 
\begin{equation*} %%\label{eq:lnkj-rec2}
\ell_{\fF,m,k,j}(x) = \ell_{\fF,m,k-1,j}(x) -
\ell_{\fF,m-1,k-1,j}(x)
\end{equation*}
holds for all $j \in \{0, 1,\dots,m\}$ and $k \in 
\{1, 2,\dots,m-j\}$.

\end{itemize}
\end{proposition}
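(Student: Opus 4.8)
The plan is to prove all six parts of Proposition~\ref{prop:lnkj} directly from the defining formula~(\ref{eq:lnkj-def}), namely $\ell_{\fF,m,k,j}(x) = \sum_{i=0}^k (-1)^i \binom{k}{i} p_{\fF,m-i,j}(x)$, using only the already-established properties of the polynomials $p_{\fF,m,k}(x)$: nonnegativity of coefficients, the reciprocity~(\ref{eq:pnk-recip}), the recurrence~(\ref{eq:pnk-rec}), and the closed form~(\ref{eq:pnk}). Parts (c), (d), (e), (f) are essentially bookkeeping, so I would dispatch them first; part (b) is a short reciprocity computation; and I expect part (a), nonnegativity of coefficients, to be the only part requiring a genuine idea, since the alternating sign in~(\ref{eq:lnkj-def}) obstructs a term-by-term argument.

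First I would record the easy identifications. Part (d) is immediate: setting $k=0$ leaves only the $i=0$ summand, so $\ell_{\fF,m,0,j}(x) = p_{\fF,m,j}(x)$. Part (c) follows by comparing~(\ref{eq:lnkj-def}) at $j=0$ with~(\ref{eq:lnk-def}) and using $p_{\fF,m-i,0}(x) = h_\fF(\sigma_{m-i},x)$, which was noted right after~(\ref{eq:pnk}). For part (e), I would substitute the recurrence~(\ref{eq:pnk-rec}) in the form $p_{\fF,m-i,j}(x) = p_{\fF,m-i,j-1}(x) + (x-1)p_{\fF,m-i-1,j-1}(x)$ into~(\ref{eq:lnkj-def}); the first group of terms reassembles as $\ell_{\fF,m,k,j-1}(x)$ and the second as $(x-1)\ell_{\fF,m-1,k,j-1}(x)$, after observing that $\sum_i (-1)^i \binom{k}{i} p_{\fF,(m-1)-i,j-1}(x)$ is exactly $\ell_{\fF,m-1,k,j-1}(x)$. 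For part (f), I would use the Pascal identity $\binom{k}{i} = \binom{k-1}{i} + \binom{k-1}{i-1}$ inside~(\ref{eq:lnkj-def}): the $\binom{k-1}{i}$ piece gives $\ell_{\fF,m,k-1,j}(x)$, and reindexing the $\binom{k-1}{i-1}$ piece by $i \mapsto i+1$ introduces a sign flip and a downward shift $m \mapsto m-1$, yielding $-\ell_{\fF,m-1,k-1,j}(x)$.

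For part (b) I would apply the operator $f(x) \mapsto x^m f(1/x)$ to~(\ref{eq:lnkj-def}). Since $p_{\fF,m-i,j}(x)$ has degree $m-i$, I have $x^m p_{\fF,m-i,j}(1/x) = x^i \cdot x^{m-i} p_{\fF,m-i,j}(1/x) = x^i p_{\fF,m-i,(m-i)-j}(x)$ by~(\ref{eq:pnk-recip}). So $x^m \ell_{\fF,m,k,j}(1/x) = \sum_{i=0}^k (-1)^i \binom{k}{i} x^i p_{\fF,m-i,m-i-j}(x)$; expanding $x^i = ((x-1)+1)^i$ and reorganizing, or more cleanly comparing this with~(\ref{eq:pnk}) and~(\ref{eq:lnkj-def}) directly, one checks it equals $\ell_{\fF,m,k,m-k-j}(x)$. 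I would double-check the index $m-k-j$ is in range: $k+j \le m$ forces $m-k-j \ge 0$ and $k+(m-k-j) = m-j \le m$, so the target polynomial is well-defined. (An alternative route for (b) is to induct using the recurrence in (e), but the direct reciprocity computation is cleaner.)

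The main obstacle is part (a). The clean approach is to \emph{avoid} the alternating formula~(\ref{eq:lnkj-def}) and instead prove an alternating-sign-free expansion, in the spirit of~(\ref{eq:lnk-alt}). Concretely, I expect the identity
\begin{equation*}
\ell_{\fF,m,k,j}(x) = \sum_{i=0}^{m-k-j} \binom{m-k-j}{i} \ell_{\fF,m-i,k,0}(x) \cdot (\text{something})
\end{equation*}
to not be quite the right shape; rather, the natural statement is that $\ell_{\fF,m,k,j}(x)$ is a nonnegative combination of the $\ell_\fF(\sigma_\bullet,x)$, which have nonnegative coefficients by their definition as local $h$-polynomials of uniform triangulations. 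The slickest path: by part (e), $\ell_{\fF,m,k,j}(x) = \ell_{\fF,m,k,j-1}(x) + (x-1)\ell_{\fF,m-1,k,j-1}(x)$, and iterating this down to $j=0$ gives $\ell_{\fF,m,k,j}(x) = \sum_{i=0}^j \binom{j}{i}(x-1)^i \ell_{\fF,m-i,k,0}(x) = \sum_{i=0}^j \binom{j}{i}(x-1)^i \ell_{\fF,m-i,k}(x)$ using part (c). Now combine with~(\ref{eq:lnk-alt}): $\ell_{\fF,m-i,k}(x) = \sum_{t=0}^{m-i-k} \binom{m-i-k}{t} \ell_\fF(\sigma_{m-i-t},x)$. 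So $\ell_{\fF,m,k,j}(x)$ is a $\ZZ$-combination of products $(x-1)^i \ell_\fF(\sigma_s,x)$. This still has signs, so I must instead mirror the combinatorial argument behind~(\ref{eq:lnk-alt}): write $p_{\fF,m-i,j}(x) = \ell_{\fF,m-i,0,j}(x)$ and use the analogue of~(\ref{eq:pnk})/(\ref{eq:lnk-alt}) that expresses $p_{\fF,m,j}(x)$ itself in terms of local $h$-polynomials $\ell_\fF(\sigma_s,x)$ with \emph{nonnegative} binomial coefficients — such a formula is exactly what underlies the nonnegativity assertion for $p_{\fF,m,k}(x)$ in~\cite{Ath22} and for $\ell_{\fF,m,k}(x)$ in~(\ref{eq:lnk-alt}). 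Substituting that into~(\ref{eq:lnkj-def}) and carefully matching the two binomial summations (over $i$ with the alternating $\binom{k}{i}$, and over the internal index) should collapse, via a Vandermonde-type identity, to a single sum with nonnegative coefficients $\binom{m-k-j}{i}$ multiplying $\ell_\fF(\sigma_{m-i},x)$ or similar. The delicate point will be getting the ranges and the Vandermonde cancellation exactly right so that no negative coefficient survives; once the correct closed form is in hand, nonnegativity is immediate because each $\ell_\fF(\sigma_s,x)$ has nonnegative coefficients. Integrality throughout is clear since every coefficient in sight is an integer combination of integers.
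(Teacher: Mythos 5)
Parts (c)--(f) are handled the same way as in the paper and are fine. The real issue is part (a), and to a lesser extent part (b).

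For part (a) you miss the key structural observation the paper uses: parts (e) and (f) combine to give a \emph{sign-free} recurrence. From (f) with $k \mapsto k+1$, $j \mapsto j-1$ one gets $\ell_{\fF,m,k,j-1}(x) = \ell_{\fF,m,k+1,j-1}(x) + \ell_{\fF,m-1,k,j-1}(x)$, and substituting this into (e) yields
\[
\ell_{\fF,m,k,j}(x) \;=\; x\,\ell_{\fF,m-1,k,j-1}(x) \;+\; \ell_{\fF,m,k+1,j-1}(x),
\]
with the range checks $k+1 \le m$ and $(k+1)+(j-1) \le m$ guaranteed by $j \ge 1$ and $k+j \le m$. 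Nonnegativity then follows at once by double induction on $m$ and $j$, the base case $j=0$ being part (c) together with~(\ref{eq:lnk-alt}). Your proposal instead notes (correctly) that iterating (e) alone yields $\ell_{\fF,m,k,j}(x) = \sum_{i=0}^{j}\binom{j}{i}(x-1)^i\ell_{\fF,m-i,k}(x)$, observes that the signs persist, and then speculates that expanding everything down to the $\ell_\fF(\sigma_\bullet,x)$ level and invoking a Vandermonde identity ``should collapse'' to a nonnegative formula. That is not a proof: you have not identified the target closed form, you have not carried out the cancellation, and you yourself flag the ranges as ``delicate.'' The missing ingredient is precisely the positive two-term recurrence above, which makes the whole Vandermonde machinery unnecessary.

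For part (b) your direct reciprocity computation is a legitimate alternative to the paper's induction on $m$ and $k$, but as written it also stops short of a proof. After applying~(\ref{eq:pnk-recip}) you are left needing the identity
\[
\sum_{i=0}^k(-1)^i\binom{k}{i}\,x^i\,p_{\fF,m-i,\,m-i-j}(x)
\;=\;\sum_{t=0}^k(-1)^t\binom{k}{t}\,p_{\fF,m-t,\,m-k-j}(x),
\]
and ``expanding $x^i$ and reorganizing'' does not make this visible. What does work is to expand $p_{\fF,m-i,m-i-j}(x) = \sum_{s=0}^{k-i}\binom{k-i}{s}(x-1)^s p_{\fF,m-i-s,m-k-j}(x)$ via iterated use of~(\ref{eq:pnk-rec}), reindex by $t=i+s$, and use $\binom{k}{i}\binom{k-i}{t-i}=\binom{k}{t}\binom{t}{i}$ together with $\sum_i\binom{t}{i}(-x)^i(x-1)^{t-i}=(-1)^t$. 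If you supply that computation, your route for (b) is a self-contained alternative to the paper's inductive one; as stated, it is a gap.
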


\begin{proof}
Parts (c) - (f) are consequences of the definition 
of $\ell_{\fF,m,k,j}(x)$, Equation~(\ref{eq:lnk-def})
and the recurrence~(\ref{eq:pnk-rec}). For part (a) 
we apply induction on $m$ and $j$. For $j=0$ the 
result holds by part (c). For $j \ge 1$, we employ 
parts (e) and (f) to find that
\begin{align*} \ell_{\fF,m,k,j}(x) & = x 
\ell_{\fF,m-1,k,j-1}(x) + \left( \ell_{\fF,m,k,j-1}
(x) - \ell_{\fF,m-1,k,j-1}(x) \right) \\ 
& = x \ell_{\fF,m-1,k,j-1}(x) + 
      \ell_{\fF,m,k+1,j-1}(x) 
\end{align*}
and apply induction. Similarly, for part (b) we 
apply induction on $m$ and $k$. For $k=0$ the result 
holds by part (d) and Equation~(\ref{eq:pnk-recip}). 
For $k \ge 1$, we apply parts (e) and (f) and the 
induction hypothesis to find that
\begin{align*} x^m \ell_{\fF,m,k,j}(1/x) & = 
x^m \ell_{\fF,m,k-1,j}(1/x) - x^m 
    \ell_{\fF,m-1,k-1,j}(1/x) \\ & = 
\ell_{\fF,m,k-1,m-k-j+1}(x) - x 
    \ell_{\fF,m-1,k-1,m-k-j}(x) \\ & = 
\ell_{\fF,m,k-1,m-k-j}(x) -
    \ell_{\fF,m-1,k-1,m-k-j}(x) \\ & =
\ell_{\fF,m,k,m-k-j}(x).
\end{align*}
\end{proof}

\section{Barycentric subdivisions}
\label{sec:sd}

Throughout this section, $\fF$ stands for the 
$f$-triangle of size $n$ for barycentric subdivision.
The goal is to prove the following result and deduce 
from it Theorem~\ref{thm:sd}.
\begin{theorem} \label{thm:sd-interlace} 
The sequence $(\ell_{\fF,n,k,j}(x))_{0 \le j \le n-k}$ 
is an interlacing sequence of real-rooted polynomials
for all $0 \le k \le n$. Moreover, each term of this 
sequence is interlaced by the Eulerian polynomial 
$A_n(x)$. 
\end{theorem}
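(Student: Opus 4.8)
The plan is to establish both assertions of Theorem~\ref{thm:sd-interlace} by an inductive argument on $n$, using the recurrences from Proposition~\ref{prop:lnkj}, together with the structural lemmas on interlacing sequences (Lemmas~\ref{lem:trans} and~\ref{lem:new}) proved in Section~\ref{subsec:polys}. The key observation is that, for the $f$-triangle of barycentric subdivision, the polynomials $p_{\fF,m,0}(x) = h_\fF(\sigma_m,x)$ are the Eulerian polynomials $A_m(x)$, so $\ell_{\fF,n,0,j}(x) = p_{\fF,n,j}(x)$ by Proposition~\ref{prop:lnkj}(d), and the $p_{\fF,n,j}(x)$ are precisely the refined descent enumerators $p_{n,j}(x)$ discussed in the introduction; by the recurrence~(\ref{eq:pnk-rec}) and \cite[Example~7.8.8]{Bra15}, $(p_{\fF,n,j}(x))_{0 \le j \le n}$ is already known to be an interlacing sequence of real-rooted polynomials with common interleaver $p_{\fF,n,0}(x) = A_n(x)$. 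This handles the base of the induction on $k$, namely $k=0$.

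The inductive step on $k$ is where the matrix Lemma~\ref{lem:new} enters. Fixing $n$ and assuming that $(\ell_{\fF,n,k-1,j}(x))_{0 \le j \le n-k+1}$ is an interlacing sequence, I would like to pass to $k$. The recurrence in Proposition~\ref{prop:lnkj}(f), $\ell_{\fF,n,k,j}(x) = \ell_{\fF,n,k-1,j}(x) - \ell_{\fF,n-1,k-1,j}(x)$, expresses the new row as a difference of two old rows, which by itself does not preserve interlacing; so instead I would use the manipulation already carried out in the proof of Proposition~\ref{prop:lnkj}(a), namely $\ell_{\fF,n,k,j}(x) = x\,\ell_{\fF,n-1,k-1,j-1}(x) + \ell_{\fF,n,k,j-1}(x)$ unwound down to $j=0$, giving $\ell_{\fF,n,k,j}(x) = \ell_{\fF,n,k,0}(x) + \sum_{i=1}^{j} x\,\ell_{\fF,n-1,k-1,i-1}(x)$. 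This exhibits the vector $(\ell_{\fF,n,k,j}(x))_j$ as the image of the vector $(\ell_{\fF,n,k,0}(x), x\,\ell_{\fF,n-1,k-1,0}(x), \dots, x\,\ell_{\fF,n-1,k-1,n-k-1}(x))$ under exactly the lower-triangular $0/x/1$ matrix of Equation~(\ref{eq:new}). Thus it suffices to check that this input vector is itself an interlacing sequence: the tail entries $x\,\ell_{\fF,n-1,k-1,i}(x)$ form an interlacing sequence by the induction hypothesis on $n$ (multiplying an interlacing sequence by $x$ preserves interlacing), and one must verify that the leading entry $\ell_{\fF,n,k,0}(x) = \ell_{\fF,n,k}(x)$ interlaces the first tail entry $x\,\ell_{\fF,n-1,k-1,0}(x) = x\,\ell_{\fF,n-1,k-1}(x)$, and that $\ell_{\fF,n,k}(x)$ is real-rooted.

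The real obstacle, and the part that I expect to require genuine work rather than bookkeeping, is controlling the "diagonal" polynomials $\ell_{\fF,n,k}(x) = \ell_{\fF,n,k,0}(x)$ — in particular showing each is real-rooted and sits in the right interlacing position relative to $A_n(x)$ and relative to $\ell_{\fF,n-1,k-1}(x)$. By Equation~(\ref{eq:lnk-alt}) these are nonnegative integer combinations of the local $h$-polynomials $\ell_\fF(\sigma_{n-i},x)$ of barycentric subdivisions of simplices, and one would want a concrete combinatorial model (the refined descent enumerators of restricted permutations in $\fS_{n+1}$ promised in the introduction) to make the interlacing inequalities transparent; I would set up that model explicitly, derive for it a recurrence of the standard interlacing-preserving shape, and feed it into Lemma~\ref{lem:trans} to close the induction. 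Once the interlacing of the whole array $(\ell_{\fF,n,k,j}(x))$ is in hand, the "interlaced by $A_n(x)$" claim follows since $A_n(x) = p_{\fF,n,0}(x) = \ell_{\fF,n,0,0}(x)$ is the top-left entry and each row and column is interlacing, so transitivity of interlacing through the array (again via Lemma~\ref{lem:trans}) places every $\ell_{\fF,n,k,j}(x)$ below $A_n(x)$ in the interlacing order, with the sum-preserves-interlacing property handling the passage from individual entries to the combination~(\ref{eq:local-uniform}) that yields Theorem~\ref{thm:sd}.
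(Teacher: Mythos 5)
Your base case ($k=0$) is fine: $\ell_{\fF,n,0,j}(x) = p_{\fF,n,j}(x) = p_{n,j}(x)$, and the interlacing of the Brenti--Welker sequence $(p_{n,j}(x))_{0 \le j \le n}$ with common interleaver $A_n(x)$ is standard. Beyond that, however, the proposal has two genuine gaps.

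First, the recurrence you invoke for the inductive step is wrong. You write $\ell_{\fF,n,k,j}(x) = x\,\ell_{\fF,n-1,k-1,j-1}(x) + \ell_{\fF,n,k,j-1}(x)$, but the identity derived in the proof of Proposition~\ref{prop:lnkj}(a) is $\ell_{\fF,n,k,j}(x) = x\,\ell_{\fF,n-1,k,j-1}(x) + \ell_{\fF,n,k+1,j-1}(x)$: the $k$-index does not drop, and the second term lies at $k+1$, not $k$. A numerical check confirms the error: for $n=3$, $k=1$, $j=1$ one has $d_{3,1,1}(x) = 2x + 2x^2$ from Table~\ref{tab:dn1j}, whereas your right-hand side $x\,d_{2,0,0}(x) + d_{3,1,0}(x) = x(1+x) + (3x+x^2) = 4x + 2x^2$. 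Consequently, your unwound formula $\ell_{\fF,n,k,j}(x) = \ell_{\fF,n,k,0}(x) + \sum_{i=1}^{j} x\,\ell_{\fF,n-1,k-1,i-1}(x)$ is false, and the proposed reduction to the matrix of Equation~(\ref{eq:new}) collapses. The recurrence that does unwind cleanly into matrix form is the one in Proposition~\ref{prop:dnkj-rec}, namely $d_{n,k,j}(x) = x\sum_{i=0}^{j-1} d_{n-1,k,i}(x) + \sum_{i=j}^{n-1} d_{n-1,k,i}(x)$ for $j \le n-k$ (with a modification for larger $j$), which keeps $k$ fixed and drops $n$; establishing it requires the combinatorial interpretation of $d_{n,k,j}(x)$ as an excedance enumerator.

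Second, and more importantly, you explicitly defer the core of the argument. You acknowledge that controlling the diagonal polynomials $\ell_{\fF,n,k}(x)$ ``is the part that I expect to require genuine work,'' and your plan is to ``set up that model explicitly'' without doing so. That combinatorial model --- the polynomials $d_{n,k,j}(x)$ of Equation~(\ref{eq:def-dnkj}), enumerating $w \in \fS_{n+1}$ with $\Fix(w) \subseteq [n+1-k]$ and $w^{-1}(1) = j+1$ by excedances, together with their ascent/bad-point reinterpretation via Foata's transformation --- is precisely what makes the paper's proof work. One further subtlety you miss: the recurrences only close if one extends $j$ past $n-k$, and in the range $n-k < j \le n$ the polynomial $d_{n,k,j}(x)$ is \emph{not} equal to $\ell_{\fF,n,k,j}(x)$ (see the remark preceding Proposition~\ref{prop:dnkj}). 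The paper therefore proves interlacing for the full sequence $(d_{n,k,j}(x))_{0 \le j \le n}$ in Theorem~\ref{thm:dnkj} --- using a matrix with entries in $\{1, x, 1+x\}$ for $k < n$ and the matrix of Lemma~\ref{lem:new} only in the special case $k = n$ --- and then restricts to $j \le n-k$ to deduce Theorem~\ref{thm:sd-interlace}. This extension to the full range of $j$ is not optional, and an argument that stays inside $j \le n-k$, as yours attempts to, will not close under the natural recurrences.
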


Throughout this section, we write $p_{n,k}(x)$ 
instead of $p_{\fF,n,k}(x)$ for $0 \le k \le n$. 
To describe these polynomials explicitly, we need to
recall some notation and terminology about the 
combinatorics of permutations; our basic reference 
is~\cite{StaEC1}. We denote by $\fS_n$ the symmetric 
group of permutations of the set $[n]$. An index
$i \in [n-1]$ is called a \emph{descent} 
(respectively, \emph{ascent} or \emph{excedance}) of 
a permutation $w \in \fS_n$ if $w(i) > w(i+1)$ 
(respectively, $w(i) < w(i+1)$ or $w(i) > i$). The 
number of descents (respectively, ascents or
excedances) of $w$ is denoted by $\des(w)$ 
(respectively, by $\asc(w)$ or $\exc(w)$). We then 
have (see \cite[Example~3.8]{Ath23+}) 
$p_{n,k}(x) = 1$ for $n=k=0$ and 
\begin{align} 
p_{n,k}(x) & = \sum_{w \in \fS_{n+1} : \, w(1) = k+1} 
                  x^{\des(w)} = 
							 \sum_{w \in \fS_{n+1} : \, w(n+1) = k+1} 
                  x^{\asc(w)} 	\label{eq:pnk-1} \\
           & = \sum_{w \in \fS_{n+1}: \, w^{-1}(1) = 
					           k+1} x^{\exc(w)} \label{eq:pnk-2}
\end{align}
for $n \ge 1$ and $k \in \{0, 1,\dots,n\}$. In 
particular, $p_{n,0}(x) = A_n(x)$ and $p_{n,n}(x) 
= xA_n(x)$ for $n \ge 1$, where  
\[ A_n(x) \, = \sum_{w \in \fS_n} x^{\des(w)} 
          \, = \sum_{w \in \fS_n} x^{\asc(w)} 
					\, = \sum_{w \in \fS_n} x^{\exc(w)} \]
is the $n$th \emph{Eulerian polynomial}.

{\scriptsize
\begin{table}[hptb]
\begin{center}
\begin{tabular}{| l || l | l | l | l | l | l ||} 
       \hline
& $k=0$ & $k=1$ & $k=2$ & $k=3$ & $k=4$ \\ \hline 
          \hline
$n=0$  & 1 &  & & & \\ \hline
$n=1$  & 1 & 0 & & & \\ \hline
$n=2$  & $1+x$ & $x$ & $x$ & & \\ \hline
$n=3$  & $1+4x+x^2$ & $3x+x^2$ & $2x+x^2$ & $x+x^2$ 
       & \\ \hline
$n=4$  & $1+11x+11x^2+x^3$ & $7x+10x^2+x^3$ & 
         $4x+9x^2+x^3$ & $2x+8x^2+x^3$ & $x+7x^2+x^3$ 
			 \\ \hline
\end{tabular}

\bigskip
\caption{The polynomials $d_{n,k}(x)$ for $n \le 4$.}
\label{tab:dnk}
\end{center}
\end{table}}

We denote by $\Fix(w)$ the set of fixed points of 
a permutation $w$ and by $\dD_{n,k}$ the set of 
permutations $w \in \fS_n$ such that $\Fix(w) 
\subseteq [n-k]$. The polynomials (see 
Table~\ref{tab:dnk})
\begin{equation} \label{eq:def-dnk}
d_{n,k}(x) = \sum_{w \in \dD_{n,k}} x^{\exc(w)},
\end{equation}
where $n \ge 1$ and $k \in \{0, 1,\dots,n\}$, 
were studied implicitly in 
\cite[Section~3.2]{BS21} and explicitly in 
\cite[Section~4]{Ath23+} (we also set $d_{n,k}
(x) := 1$ for $n=k=0$). These polynomials 
interpolate between the Eulerian polynomial 
$d_{n,0}(x) = A_n(x)$ and 
\[ d_{n,n}(x) = \sum_{w \in \fS_n: \, \Fix(w) = 
   \varnothing} x^{\exc(w)}, \]
called the $n$th \emph{derangement polynomial};
see, for instance, \cite[Section~3.2]{BS21} 
\cite{GS20, HZ19} and references therein. As
part of their study of the derangement 
transformation \cite[Section~3.2]{BS21}, using 
tools from the theory of multivariate stable 
polynomials, Br\"and\'en and Solus showed that 
$(d_{n,k}(x))_{0 \le k \le n}$ is an interlacing 
sequence of real-rooted polynomials for every 
$n \in \NN$ (this follows from 
\cite[Theorem~3.6]{BS21}). Our proof of 
Theorem~\ref{thm:sd-interlace} yields a more 
elementary proof of this fact. We need to focus 
on further refined polynomials, defined as
$d_{n,k,j}(x) := 1$ for $n=k=j=0$ and
\begin{equation} \label{eq:def-dnkj}
d_{n,k,j}(x) = \sum_{w \in \dD_{n+1,k}: \, 
               w^{-1}(1) = j+1} x^{\exc(w)} 
\end{equation}
for $n \ge 1$ and $k, j \in \{0, 1,\dots,n\}$;
see Tables~\ref{tab:dn1j} and~\ref{tab:dn2j} 
for small values of $n, k$. 
We will show that $\ell_{\fF,n,k,j}(x) = 
d_{n,k,j}(x)$ in the range $k + j \le n$ of 
interest. We note that $d_{n,k,0}(x) = d_{n,k}
(x)$ for all $n,k$ and that the polynomials 
$d_{n,n,j}(x)$ have appeared, in different  
notation, on \cite[p.~46]{HZ19}.

{\scriptsize
\begin{table}[hptb]
\begin{center}
\begin{tabular}{| l || l | l | l | l | l | l ||} 
       \hline
& $j=0$ & $j=1$ & $j=2$ & $j=3$ & $j=4$ \\ \hline 
          \hline
$n=1$  & 0 & $x$ & & & \\ \hline
$n=2$  & $x$ & $x$ & $x+x^2$ & & \\ \hline
$n=3$  & $3x+x^2$ & $2x+2x^2$ & $x+3x^2$ & 
         $x+4x^2+x^3$ & \\ \hline
$n=4$  & $7x+10x^2+x^3$ & $4x+12x^2+2x^3$ & 
         $2x+12x^2+4x^3$ & $x+10x^2+7x^3$ & 
				 $x+11x^2+11x^3+x^4$ \\ \hline
\end{tabular}

\bigskip
\caption{The polynomials $d_{n,k,j}(x)$ for $k=1$ 
         and $n \le 4$.}
\label{tab:dn1j}
\end{center}
\end{table}}

{\scriptsize
\begin{table}[hptb]
\begin{center}
\begin{tabular}{| l || l | l | l | l | l | l ||} 
       \hline
& $j=0$ & $j=1$ & $j=2$ & $j=3$ & $j=4$ \\ \hline 
          \hline
$n=2$  & $x$ & $x$ & $x^2$ & & \\ \hline
$n=3$  & $2x+x^2$ & $x+2x^2$ & $x+3x^2$ & $3x^2+x^3$ 
       & \\ \hline
$n=4$  & $4x+9x^2+x^3$ & $2x+10x^2+2x^3$ & 
         $x+9x^2+4x^3$ & $x+10x^2+7x^3$ & 
				 $7x^2+10x^3+x^4$ \\ \hline
\end{tabular}

\bigskip
\caption{The polynomials $d_{n,k,j}(x)$ for $k=2$ 
         and $n \le 4$.}
\label{tab:dn2j}
\end{center}
\end{table}}

Equation~(\ref{eq:pnk-2}) is a consequence of  
the first fundamental transformation 
\cite[Section~I.3]{FSc70} 
\cite[Section~I.3]{StaEC1}, defined when each 
cycle of a permutation $w \in \fS_{n+1}$ is 
written with its smallest element first and 
cycles are arranged in decreasing order of 
their smallest element. We say that $w(j) \in 
[n]$ is a \emph{bad point} of $w \in \fS_n$ if 
$j$ is a left-to-right minimum of $w$ (meaning 
that $w(i) \ge w(j)$ for $i \le j$) and either 
$i = n$ or $w(i) > w(i+1)$ (this notion 
corresponds to the one adopted in 
\cite[Section~3]{Ath23+} after reversing the 
permutation). The first fundamental transformation 
is a bijection $\varphi_n : \fS_n \to \fS_n$ such 
that for every $w \in \fS_n$ (a) $\exc(w) = 
\asc(\varphi(w))$; (b) $\Fix(w)$ is equal to the 
set of bad points of $\varphi(w)$; and (c) $w^{-1}
(1) = \varphi(w)(n)$. As a consequence, 
\begin{equation} \label{eq:dnk-bad}
d_{n,k}(x) = \sum_{w \in \bB_{n,k}} x^{\asc(w)}, 
\end{equation}
where $\bB_{n,k}$ stands for the set of permutations 
$w \in \fS_n$ such that the set of bad points of $w$ 
is contained in $[n-k]$.

The following two propositions collect the 
properties of the polynomials $d_{n,k,j}(x)$ which
will be needed in this section. We note that part 
(b) of the next proposition fails to hold outside 
the range $j+k \le n$, where $\ell_{\fF,n,k,j}(x)$
is defined by Equation~(\ref{eq:lnkj-def}) and 
$p_{\fF,n,k}(x) := 0$ for $n<k$.
\begin{proposition} \label{prop:dnkj} 
Let $n \in \NN$.
\begin{itemize}
\itemsep=0pt
\item[(a)] 
We have $\ell_{\fF,n,k}(x) = d_{n,k}(x)$ for every 
$k \in \{0, 1,\dots,n\}$.

\item[(b)] 
We have $\ell_{\fF,n,k,j}(x) = d_{n,k,j}(x)$ for all 
$k, j \in \{0, 1,\dots,n\}$ with $k + j \le n$. In 
particular, Proposition~\ref{prop:lnkj} applies to 
the polynomials $d_{n,k,j}(x)$ with $0 \le j \le n-k$.

\item[(c)] 
We have 
\[ d_{n,k,j}(x) = \sum_{w \in \bB_{n+1,k}: \, 
                  w(n+1) = j+1} x^{\asc(w)} \]
for all $k, j \in \{0, 1,\dots,n\}$.

\item[(d)] 
For $n \ge 1$, we have
\[ d_{n,k,0}(x) = d_{n,k}(x) = \sum_{j=0}^{n-1} 
                  d_{n-1,k,j}(x) \]
for every $k \in \{0, 1,\dots,n-1\}$.

\item[(e)] 
For $n \ge 1$ and $k \in \{1, 2,\dots,n\}$, we have 
\[ d_{n,k,j}(x) = \begin{cases}
   d_{n,k-1,j}(x) - d_{n-1,k-1,j}(x), & 
	        \text{if $0 \le j \le n-k$} \\
   d_{n,k-1,n-k+1}(x), & \text{if $j = n-k+1$} \\
	 d_{n,k-1,j}(x) - d_{n-1,k-1,j-1}(x), & 
	        \text{if $n-k+1 < j \le n$.} \\
                  \end{cases} \]

\item[(f)] 
For $n \ge k \ge 1$, we have
\[ d_{n,k,j}(x) = x \sum_{i=0}^{j-1} d_{n-1,k-1,i}(x) 
                  + \sum_{i=j}^{n-1} d_{n-1,k-1,i}(x) 
									  \]
for $n-k+1 \le j \le n$.

\item[(g)]
We have $d_{n,k,n}(x) = x d_{n,k-1}(x)$ for $1 \le k 
\le n$.

\end{itemize}
\end{proposition}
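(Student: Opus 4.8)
The plan is to establish parts (a)--(g) essentially in the stated order, since each builds on the previous ones, with the combinatorial identity in part (b) being the crux that allows the purely algebraic polynomials $\ell_{\fF,n,k,j}(x)$ to be replaced by the permutation enumerators $d_{n,k,j}(x)$ everywhere afterwards. For part (a), I would recall that $\fF$ is the $f$-triangle of barycentric subdivision, so $h_\fF(\sigma_m,x) = A_m(x)$ (the order complex of the face poset of the $(m-1)$-simplex has $h$-polynomial $A_m(x)$), and then compare $\ell_{\fF,n,k}(x) = \sum_{i=0}^k (-1)^i \binom{k}{i} A_{n-i}(x)$ from Equation~(\ref{eq:lnk-def}) with the inclusion--exclusion expansion of $d_{n,k}(x) = \sum_{w : \Fix(w) \subseteq [n-k]} x^{\exc(w)}$; the point is that $\sum_{i=0}^{m-k}\binom{m-k}{i}d_{n,k}$-type relations in Equation~(\ref{eq:lnk-alt}) match the standard fact that summing the derangement-type refinements over fixed-point sets recovers $A_{n-i}(x)$. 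Alternatively, and perhaps more cleanly, part (a) can be taken as known from \cite[Section~4]{Ath23+} and cited.

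For part (b), the natural route is induction on $n$, exploiting the recurrence in Proposition~\ref{prop:lnkj}(e)--(f). I would first verify the base cases and the boundary cases $k=0$ (where $\ell_{\fF,n,0,j}(x) = p_{\fF,n,j}(x)$ by Proposition~\ref{prop:lnkj}(d), matched against Equation~(\ref{eq:pnk-1}) and the definition of $d_{n,0,j}(x)$) and $j=0$ (which is part (a)). Then, rather than pushing the algebraic recurrence directly, I expect it is cleaner to prove the combinatorial statement in part (c) first --- namely that $d_{n,k,j}(x)$ counts, by ascents, permutations $w \in \bB_{n+1,k}$ with $w(n+1) = j+1$ --- by transporting the definition~(\ref{eq:def-dnkj}) through the first fundamental transformation $\varphi_{n+1}$, using properties (a), (b), (c) of $\varphi$ exactly as in the derivation of Equation~(\ref{eq:dnk-bad}): condition $w^{-1}(1) = j+1$ becomes $\varphi(w)(n+1) = j+1$, the excedance statistic becomes the ascent statistic, and the bad-point condition matches the $\Fix$-condition defining $\dD_{n+1,k}$. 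Once (c) is in hand, parts (d), (e), (f), (g) become statements about the ascent enumerators $\sum x^{\asc(w)}$ over the sets $\bB_{n+1,k}$ with a prescribed last value, which I would prove by direct combinatorial bijections: deleting the last entry $n+1$ (if $w(n+1)=n+1$) or the value $j+1$ from the one-line notation, tracking how the bad-point set and the ascent count change. Part (d) says the last entry is $n+1$ exactly when the bad-point set avoids it, and otherwise one sums over the possible penultimate-to-last transitions; part (f) isolates the contribution of whether position $n$ is an ascent of $w$; part (g) is the extreme case $j=n$, where $w(n+1) = n+1$ is forced to be a bad point, contradicting $\bB_{n+1,k}$ unless... wait --- more carefully, $j=n$ forces $w(n+1) = n+1$, which is a left-to-right maximum, hence never a bad point, so one peels it off and the ascent at position $n$ always contributes, giving the factor $x$ and reducing to $d_{n,k-1}(x)$. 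Then part (b) follows by checking that $d_{n,k,j}(x)$ satisfies the same recurrences~(\ref{eq:pnk-rec})-type relations that characterize $\ell_{\fF,n,k,j}(x)$, via (e) and the recurrences of Proposition~\ref{prop:lnkj}, completing the induction.

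The main obstacle I anticipate is part (e), and specifically the case analysis on $j$ relative to $n-k+1$: the three-way split reflects the fact that the bad-point condition $\subseteq [n-k]$ interacts with the position of the value $1$ (equivalently, via $\varphi$, the last value $j+1$) in a way that changes qualitatively as $j$ crosses the threshold. Getting the bijections right --- in particular, correctly identifying when removing an entry creates or destroys a bad point, and how the index set $[n-k]$ shrinks to $[n-k]$ or $[(n-1)-(k-1)]$ --- requires care, and it is the one place where the "$w^{-1}(1) = j+1$ outside the range $j+k \le n$" subtlety flagged before the proposition becomes genuinely load-bearing. Once part (e) is correct, the remaining parts and the deduction of (b) should be routine. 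I would present (a), then (c) via $\varphi$, then (d)--(g) as corollaries of (c), then (e), and finally (b) by matching recurrences.
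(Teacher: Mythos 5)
Your plan has the right ingredients and would very likely lead to a complete proof, but it takes a genuinely different (and appreciably longer) route than the paper, and the way you have ordered the parts hides the simplest argument.

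The paper proves part (b) \emph{first} and \emph{directly}: combining Equation~(\ref{eq:lnkj-def}) with $p_{n-i,j}(x)=\sum_{w\in\fS_{n-i+1}:\,w^{-1}(1)=j+1}x^{\exc(w)}$ from Equation~(\ref{eq:pnk-2}) expresses $\ell_{\fF,n,k,j}(x)$ as an alternating sum, and a one-step inclusion--exclusion on which of the ``top'' $k$ values of $[n+1]$ are fixed (legal precisely because $j\le n-k$ keeps $w^{-1}(1)=j+1$ away from the removable values) identifies it with $d_{n,k,j}(x)$. Part (a) is then the special case $j=0$, rather than a separate ingredient. You instead propose to prove (a) by a parallel inclusion--exclusion (or to cite~\cite{Ath23+}), defer (b) to the end, and recover it by matching the recurrence of Proposition~\ref{prop:lnkj}(f) against part~(e). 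That closing step does work --- the base case $k=0$ comes from Proposition~\ref{prop:lnkj}(d) and Equation~(\ref{eq:pnk-2}) --- but you pay for it by first having to prove (e), which your plan locates as the hardest step. The paper does not incur that cost: it proves (d) and (e) directly from the excedance definitions~(\ref{eq:def-dnk})--(\ref{eq:def-dnkj}), \emph{not} via (c). For (e) one simply notes that $d_{n,k-1,j}(x)-d_{n,k,j}(x)$ enumerates $w\in\fS_{n+1}$ with $w^{-1}(1)=j+1$, $\Fix(w)\subseteq[n-k+2]$ and $n-k+2\in\Fix(w)$; peeling off the fixed point $n-k+2$ and relabelling gives the three cases automatically according to whether $j+1<n-k+2$, $j+1=n-k+2$, or $j+1>n-k+2$. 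Your proposal to prove (e) bijectively in the ascent/bad-point picture obtained from (c) is, as you suspect, much more delicate: removing a fixed point in the excedance picture does not correspond to any simple deletion in the ascent picture, and the case analysis on where the bad point $n-k+2$ lives is considerably messier than the fixed-point peeling. Parts (c), (f) and (g) you handle essentially as the paper does (Foata for (c), conditioning on $w(n)$ for (f), and (g) from (d) and (f)). In short: your route is viable, but you miss the observation that (b) admits the same one-shot inclusion--exclusion you already see for (a), and you route (d), (e) through (c) when they are easier straight from the definitions; both choices make the proof substantially longer and make (e), which is routine in the paper, into a genuine obstacle.
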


\begin{proof}
By combining Equations~(\ref{eq:lnkj-def}) 
and~(\ref{eq:pnk-2}) we get 
\[ \ell_{\fF,n,k,j}(x) = 
   \sum_{i=0}^k (-1)^i {k \choose i}
   \sum_{w \in \fS_{n-i+1}: \, w^{-1}(1) = 
		  			           j+1} x^{\exc(w)}. \]
Since $0 \le j \le n-k$, part (b) follows from this 
expression via a standard application of the 
principle of inclusion-exclusion. Part (a) is the 
special case $j=0$ of part (b) (and is equivalent 
to the result of \cite[Example~6.3~(a)]{Ath23+} as
well). Part (c) follows from 
Equation~(\ref{eq:def-dnkj}) via Foata's
fundamental transformation, already discussed.
Part (d) follows from Equation~(\ref{eq:def-dnk})
by conditioning on the value of $w^{-1}(1)$ for $w 
\in \dD_{n,k}$. 

To verify part (e) we apply the definition of 
$d_{n,k,j}(x)$ and note that
\[ d_{n,k-1,j}(x) - d_{n,k,j}(x) = \sum
   x^{\exc(w)}, \]
where the sum ranges over all permutations $w \in 
\fS_{n+1}$ such that $w^{-1}(1) = j+1$, $\Fix(w) 
\subseteq [n-k+2]$ and $n-k+2 \in \Fix(w)$.
Clearly, this sum is equal to $d_{n-1,k-1,j}(x)$
if $0 \le j \le n-k$, to zero if $j = n-k+1$, and 
to $d_{n-1,k-1,j-1}(x)$ if $n-k+1 < j \le n$.

Part (f) follows from the interpretation of 
$d_{n,k,j}(x)$ given in part (c) by conditioning 
on the possible values of $w(n)$. Part (g) follows
from parts (d) and (f).
\end{proof}

Part (b) of the following proposition is due to
Haglund and Zhang \cite[p.~46]{HZ19}.
\begin{proposition} \label{prop:dnkj-rec} 
Let $n \ge 1$.
\begin{itemize}
\itemsep=0pt
\item[(a)]  
For every $k \in \{0, 1,\dots,n-1\}$ we have
\[ d_{n,k,j}(x) = x \sum_{i=0}^{j-1} d_{n-1,k,i}(x) 
                  + \sum_{i=j}^{n-1} d_{n-1,k,i}(x) 
									  \]
for $0 \le j \le n-k$ and 
\[ d_{n,k,j}(x) = x \sum_{i=0}^{j-2} d_{n-1,k,i}(x) 
                  + (1+x) d_{n-1,k,j-1}(x) + 
									  \sum_{i=j}^{n-1} d_{n-1,k,i}(x) 
									  \]
for $n-k+1 \le j \le n$.

\item[(b)] {\rm (cf. \cite[p.~46]{HZ19})}
We have 
\[ d_{n,n,0}(x) = \sum_{i=1}^{n-1} d_{n-1,n-1,i}(x) 
   \]
and 
\[ d_{n,n,j}(x) = x \sum_{i=0}^{j-1} d_{n-1,n-1,i}(x) 
                  + \sum_{i=j}^{n-1} d_{n-1,n-1,i}(x)
									\]
for $1 \le j \le n$.
\end{itemize}
\end{proposition}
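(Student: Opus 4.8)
The plan is to prove Proposition~\ref{prop:dnkj-rec} by deriving both parts from the combinatorial interpretation of $d_{n,k,j}(x)$ furnished by Proposition~\ref{prop:dnkj}(c), namely as the generating polynomial $\sum x^{\asc(w)}$ over permutations $w \in \bB_{n+1,k}$ with $w(n+1) = j+1$, where $\bB_{n+1,k}$ consists of permutations whose bad points lie in $[n+1-k]$. The basic device throughout is to condition on the value $w(n)$ and to analyse how inserting the letter $n+1$ into a permutation of length $n$ (or, dually, deleting the largest letter) affects the ascent statistic and the set of bad points.

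First I would set up the insertion bijection: given $w \in \fS_{n+1}$ with $w(n+1) = j+1 \le n$, deleting the largest value $n+1$ from its position yields a permutation $w' \in \fS_n$; conversely every $w' \in \fS_n$ with $(w')^{-1}(\text{last letter})$ recorded arises this way. The key bookkeeping: if $n+1$ sits in position $i \le n$ of $w$ then $w'$ has its last entry equal to $w(n+1) = j+1$ shifted appropriately, $\asc(w) = \asc(w') + 1$ when the slot to the left of $n+1$ was previously an ascent-or-boundary contributing position (this is where the factor $x$ comes from) and $\asc(w) = \asc(w')$ otherwise, and—crucially—$n+1$ is never a bad point (it is a left-to-right minimum only if it is in position $1$, impossible for $n \ge 1$ here, and in any case $w(i) = n+1$ is never $\le$ a later value), so the set of bad points of $w$ coincides with that of $w'$ except possibly at the position newly created. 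Carefully tracking which of the $n$ possible insertion positions produces an extra ascent gives the split $x\sum_{i=0}^{j-1} + \sum_{i=j}^{n-1}$ in the range $0 \le j \le n-k$, exactly as in Proposition~\ref{prop:dnkj}(f) but now without the shift in the second sum because here the deleted letter is the largest rather than the smallest. For $n-k+1 \le j \le n$ the position $n+1-k$ is allowed to be a bad point of $w'$ but the constraint interacts with insertion at one specific slot, producing the anomalous middle term $(1+x)d_{n-1,k,j-1}(x)$; I would isolate that slot and check the ascent contribution there directly.

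For part (b), the $k = n$ case, I would simply specialize: $\bB_{n+1,n}$ forces all bad points into $[1]$, i.e. only position $1$ may be a bad point, and $w(n+1) = 0+1 = 1$ is impossible when $j = 0$ unless handled separately, which explains why $d_{n,n,0}(x)$ has the pure form $\sum_{i=1}^{n-1} d_{n-1,n-1,i}(x)$ with no $x$-weighted block (the $i=0$ term drops because of the bad-point restriction, matching Proposition~\ref{prop:dnkj}(g) dualities), while for $1 \le j \le n$ the generic recurrence of part (a) collapses, since the range $n-k+1 \le j \le n$ becomes $1 \le j \le n$ and one checks the would-be exceptional term $(1+x)d_{n-1,n-1,j-1}(x)$ actually merges back into the two sums. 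Alternatively, and perhaps more cleanly, I would just cite that part (b) is \cite[p.~46]{HZ19} and that it also follows from part (a) by inspection at $k=n$, keeping the new content concentrated in part (a).

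The main obstacle will be the careful sign-free verification of exactly which insertion positions of the largest letter create a new ascent and how that bookkeeping differs in the two $j$-ranges, together with confirming that the bad-point condition $\Fix$-image lies in $[n+1-k]$ is preserved under insertion/deletion except at the single boundary slot $n+1-k$; that boundary slot is precisely the source of the $(1+x)$ coefficient, and getting its ascent weight right (neither $1$ nor $x$ alone, but genuinely $1+x$ after summing two sub-cases) is the delicate point. Everything else is a routine conditioning argument parallel to the proof of Proposition~\ref{prop:dnkj}(f), so once the insertion lemma is stated precisely the two displayed identities of part (a) fall out by collecting terms, and part (b) is then immediate.
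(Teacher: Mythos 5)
Your proposal diverges from the paper's proof of part~(a), which is entirely algebraic: the paper proves the first formula by induction on $j$ using the recurrence of Proposition~\ref{prop:lnkj}~(e) (transferred via Proposition~\ref{prop:dnkj}~(b)) with base case Proposition~\ref{prop:dnkj}~(d), and proves the second formula by comparing it with Proposition~\ref{prop:dnkj}~(f) and reducing to an identity that follows from Proposition~\ref{prop:dnkj}~(e) and~(f). Only for part~(b) does the paper condition on $w(n)$ in the combinatorial model of Proposition~\ref{prop:dnkj}~(c), which is the route you take throughout. So for part~(b) you are aligned with the paper; for part~(a) you propose a genuinely different, combinatorial route.

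The trouble is that the combinatorial setup you describe for part~(a) does not produce the stated recurrence. You propose deleting the largest \emph{value} $n+1$ from $w$. But if $w(n+1) = j+1 \le n$, then $n+1$ occupies some position $p \le n$, and deleting it (closing the gap) gives $w' \in \fS_n$ with $w'(n) = w(n+1) = j+1$; the last letter of $w'$ is always $j+1$, independent of $p$. That would express $d_{n,k,j}(x)$ as a sum of terms of the form $d_{n-1,?,j}(x)$ weighted over insertion positions $p$, not as the required weighted sum $x\sum_{i<j}d_{n-1,k,i}(x) + \sum_{i\ge j}d_{n-1,k,i}(x)$ in which the \emph{last-letter parameter} $i$ varies. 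The bijection that does give a varying $i$ is: delete the last \emph{entry} $w(n+1) = j+1$, standardize the remaining letters to $[n]$, and set $i+1 := \hat{w}(n)$; then $\asc(w) = \asc(\hat{w}) + [\hat{w}(n) < j+1]$, giving the split at $i=j$. This is precisely what the paper uses to prove Proposition~\ref{prop:dnkj}~(f), but there the conclusion is in terms of $d_{n-1,k-1,i}$, not $d_{n-1,k,i}$, because deleting a large last entry (the range $j \ge n-k+1$) shifts the bad-point constraint from $[n+1-k]$ to $[n-(k-1)]$. For the complementary range $0 \le j \le n-k$ the bad-point accounting is different and it is not at all clear that the same deletion preserves membership in $\bB_{\cdot,k}$ rather than $\bB_{\cdot,k-1}$; you gesture at this ("the boundary slot $n+1-k$ is precisely the source of the $(1+x)$ coefficient") but give no argument, and the definition of bad points as \emph{values} (not positions, as you write in part~(b)) makes the bookkeeping nontrivial. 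Until that gap is closed, the proposed proof of part~(a) does not go through, and you would do better either to supply a precise insertion lemma with full bad-point tracking or to fall back on the algebraic route the paper takes.
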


\begin{proof}
Let $k \in \{0, 1,\dots,n-1\}$. Given part (d) of 
Proposition~\ref{prop:dnkj}, the first formula of 
part (a) follows by induction on $j$ from the 
recurrence 
\[ d_{n,k,j}(x) = d_{n,k,j-1}(x) + (x-1) 
   d_{n-1,k,j-1}(x), \]

\medskip
\noindent
which holds for $1 \le j \le n-k$ by 
Proposition~\ref{prop:lnkj} (e) and 
Proposition~\ref{prop:dnkj} (b). A comparison of 
the second formula to that of 
Proposition~\ref{prop:dnkj} (f) shows that the 
former is equivalent to 
\[ d_{n-1,k,j-1}(x) = x \sum_{i=0}^{j-1} 
   \left( d_{n-1,k-1,i}(x) - d_{n-1,k,i}(x) 
	            \right) + \sum_{i=j}^{n-1}
	 \left( d_{n-1,k-1,i}(x) - d_{n-1,k,i}(x) 
	            \right). \]
In view of part (e) of Proposition~\ref{prop:dnkj},
and given that $j \ge n-k+1$, the latter equality 
can be rewritten as 
\[ d_{n-1,k,j-1}(x) = x \sum_{i=0}^{j-2} 
   d_{n-2,k-1,i}(x) + \sum_{i=j-1}^{n-2} 
	                             d_{n-2,k-1,i}(x). \]
This holds by Proposition~\ref{prop:dnkj} (f) and
the proof of part (a) follows. Part (b) follows 
from the interpretation of $d_{n,n,j}(x)$ given 
in Proposition~\ref{prop:dnkj} (c) by conditioning 
on the possible values of $w(n)$. 
\end{proof}

\begin{theorem} \label{thm:dnkj} 
The sequence $(d_{n,k,j}(x))_{0 \le j \le n}$ is 
an interlacing sequence of real-rooted polynomials
for all $0 \le k \le n$. Moreover, $d_{n,k,j}(x)$ 
is interlaced by the Eulerian polynomial $A_n(x)$
for every $j \in \{0, 1,\dots,n-k\}$. 
\end{theorem}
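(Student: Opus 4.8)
The plan is to prove the two assertions of Theorem~\ref{thm:dnkj} simultaneously by induction on $n$, using the recurrences assembled in Propositions~\ref{prop:dnkj} and~\ref{prop:dnkj-rec} to pass from level $n-1$ to level $n$. The base cases $n=0,1$ are immediate from the tables (or from $d_{0,0,0}(x)=1$, $d_{1,0,0}(x)=A_1(x)=1$, $d_{1,0,1}(x)=x$, $d_{1,1,0}(x)=0$, $d_{1,1,1}(x)=x$). So fix $n\ge 2$, $0\le k\le n$, and assume the full statement of the theorem holds at level $n-1$, in particular that $(d_{n-1,k',i}(x))_{0\le i\le n-1}$ is an interlacing sequence for every relevant $k'$ and that each $d_{n-1,k',i}(x)$ with $i\le n-1-k'$ is interlaced by $A_{n-1}(x)$.

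First I would handle the ``easy'' range $0\le j\le n-k$. Here Proposition~\ref{prop:dnkj-rec}(a) (first formula) expresses $(d_{n,k,0}(x),\dots,d_{n,k,n-k}(x))$ as the image of the interlacing sequence $(d_{n-1,k,0}(x),\dots,d_{n-1,k,n-1}(x))$ under precisely the $(n-k+1)\times n$ matrix of the form appearing in Lemma~\ref{lem:new} (an upper-left block of zeros/ones, strictly lower-triangular $x$'s), so Lemma~\ref{lem:new} gives that $(d_{n,k,j}(x))_{0\le j\le n-k}$ is an interlacing sequence of real-rooted polynomials. For the $A_n(x)$-interlacing claim, note $d_{n,k,0}(x)=d_{n,0,0}(x)-\sum(\text{lower }k\text{ corrections})$; more directly, one shows each $d_{n,k,j}(x)$ in this range is interlaced by $d_{n,0,0}(x)=A_n(x)$. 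This follows because $A_n(x)=d_{n,0,0}(x)=\sum_{i=0}^{n-1}d_{n-1,0,i}(x)$ (Proposition~\ref{prop:dnkj}(d)) is itself a sum over the $n-1$ level of polynomials all interlaced by $A_{n-1}(x)$; applying the same Lemma~\ref{lem:new} machinery to the combined sequence, or invoking Proposition~\ref{prop:lnkj}(b) (palindromicity of $\ell_{\fF,n,k,j}=d_{n,k,j}$) together with the standard fact that if $f\preceq g$ and both are symmetric of the same degree then $f\preceq g$ is preserved under the reciprocal, one gets that $d_{n,k,j}(x)\preceq A_n(x)$ for $0\le j\le n-k$. (An alternative, cleaner route: Theorem~\ref{thm:sd-interlace} asks exactly for $\ell_{\fF,n,k,j}=d_{n,k,j}$; since $\ell_{\fF,n,0,0}=A_n$ sits at the top of the $k=0$ sequence and the $\preceq$ relations propagate down in $k$ via Proposition~\ref{prop:dnkj}(e), one chases interlacing along a path from $(n,k,j)$ up to $(n,0,0)$.)

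The main obstacle is the full sequence $(d_{n,k,j}(x))_{0\le j\le n}$, i.e.\ extending past $j=n-k$ into the range $n-k+1\le j\le n$ where the second formula of Proposition~\ref{prop:dnkj-rec}(a) carries an extra $(1+x)d_{n-1,k,j-1}(x)$ term on the diagonal. To deal with this I would use Lemma~\ref{lem:trans}: it suffices to verify (i) $d_{n,k,j-1}(x)\preceq d_{n,k,j}(x)$ for each consecutive pair $1\le j\le n$, and (ii) the ``wrap-around'' relation $d_{n,k,0}(x)\preceq d_{n,k,n}(x)$. For (i) within the easy range this is already done; across the boundary $j=n-k\to n-k+1$ and within the hard range one compares the two recurrence formulas of Proposition~\ref{prop:dnkj-rec}(a): the transition matrix from $(d_{n-1,k,i}(x))_i$ to a consecutive pair $(d_{n,k,j-1}(x),d_{n,k,j}(x))$ is again built from $2\times 2$ blocks that preserve interlacing (rows $(x,\dots,x,1,1,\dots)$ vs.\ $(x,\dots,x,1+x,1,\dots)$ or $(x,x,1+x,\dots)$), so one invokes \cite[Theorem~7.8.5]{Bra15} exactly as in Lemma~\ref{lem:new}'s proof, checking the handful of block types that occur. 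For (ii), by Proposition~\ref{prop:dnkj}(g) we have $d_{n,k,n}(x)=x\,d_{n,k-1}(x)$, and $d_{n,k,0}(x)=d_{n,k}(x)=d_{n,k-1}(x)-d_{n-1,k-1,0}(x)$ by Proposition~\ref{prop:dnkj}(e); so (ii) reduces to $d_{n,k-1}(x)-d_{n-1,k-1,0}(x)\preceq x\,d_{n,k-1}(x)$, which should follow from the $n-1$-level interlacing (writing $d_{n,k-1}(x)=x\sum_{i<?}+\sum_{\ge ?}$ of $(n-1)$-level terms and using that $f\preceq g,\ f\preceq xg$-type relations hold within an interlacing sequence) — this is the one spot where a short direct argument, rather than a black-box matrix lemma, is needed. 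Once Lemma~\ref{lem:trans} applies, $(d_{n,k,j}(x))_{0\le j\le n}$ is an interlacing sequence, completing the induction; the $A_n(x)$-interlacing of the terms with $j\le n-k$ was secured above, so both claims hold at level $n$.
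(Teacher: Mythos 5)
Your overall strategy (induction on $n$ via the matrix recurrences of Proposition~\ref{prop:dnkj-rec}) is the right one and matches the paper's. However, there are several concrete gaps.

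First, the split into an ``easy'' range $0\le j\le n-k$ and a ``hard'' range $n-k+1\le j\le n$ is unnecessary and is where your argument comes apart. The paper applies a single matrix-preservation result to the full $(n+1)\times n$ transition matrix from $(d_{n-1,k,i})_{0\le i\le n-1}$ to $(d_{n,k,j})_{0\le j\le n}$ all at once (citing \cite[Theorem~2.4]{HZ19} for $k<n$ and Lemma~\ref{lem:new} for $k=n$); it does not split off the rows with a $1+x$ on the diagonal. Your route for the hard range via Lemma~\ref{lem:trans} requires the wrap-around $d_{n,k,0}\preceq d_{n,k,n}$, which you reduce to $d_{n,k-1}(x)-d_{n-1,k-1,0}(x)\preceq x\,d_{n,k-1}(x)$ and then flag as ``the one spot where a short direct argument is needed'' without supplying it. That step is not easy: it carries essentially the same content as verifying that the first and last rows of the big matrix interlace correctly, so nothing has been saved over the direct matrix argument, and the proof is not closed.

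Second, Lemma~\ref{lem:new} is misattributed. Its matrix has a $0$ in position $(1,1)$, whereas the first formula of Proposition~\ref{prop:dnkj-rec}(a) at $j=0$ gives $d_{n,k,0}=\sum_{i=0}^{n-1}d_{n-1,k,i}$, i.e.\ a top row of all $1$'s. Lemma~\ref{lem:new} is exactly what the paper uses for the separate case $k=n$ (where Proposition~\ref{prop:dnkj-rec}(b) does produce the $0$ in the corner), a case your proposal does not treat at all since Proposition~\ref{prop:dnkj-rec}(a) only covers $k\le n-1$.

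Third, the $A_n(x)$-interlacing argument is vague, partly circular (you invoke Theorem~\ref{thm:sd-interlace}, which is deduced \emph{from} Theorem~\ref{thm:dnkj}), and states the interlacing in the wrong direction ($d_{n,k,j}\preceq A_n$ rather than the claimed $A_n\preceq d_{n,k,j}$). The paper's argument here is a genuine second half of the proof: it first shows $(d_{n,k})_{0\le k\le n}$ is interlacing by combining the already-established interlacing of $(d_{n,k,j})_j$ with the identity $d_{n,k,n}=x\,d_{n,k-1}$ (Proposition~\ref{prop:dnkj}(g)), then uses the self-reciprocity of $d_{n,n}$ to build the chain $A_n=d_{n,0}\preceq\cdots\preceq d_{n,n}=\iI_n(d_{n,n})\preceq\cdots\preceq\iI_n(d_{n,0})=xA_n$ and closes it with Lemma~\ref{lem:trans}; finally it repeats the maneuver using the palindromicity $d_{n,k,n-k-j}=\iI_n(d_{n,k,j})$ from Proposition~\ref{prop:lnkj}(b) and applies Lemma~\ref{lem:trans} once more. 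None of that chain is present in your sketch, so this part of the theorem remains unproved.
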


\begin{proof}
The result is trivial for $n \in \{0, 1\}$, so 
we assume that $n \ge 2$. By 
Proposition~\ref{prop:dnkj-rec} we have 

\[ \begin{pmatrix} 
   d_{n,k,0}(x) \\ \vdots \\ d_{n,k,n-k}(x) \\ 
	 d_{n,k,n-k+1}(x) \\ \vdots \\ d_{n,k,n}(x) 
	 \end{pmatrix} = 
   \begin{pmatrix} 
	 1 & 1 & \cdots & 1 & \cdots & 1 \\ 
   x & 1 & \cdots & 1 & \cdots & 1 \\ 
	 \vdots & \vdots & \ddots & \vdots & \cdots & 
	 \vdots \\ 
	 x & x & \cdots & 1 & \cdots & 1 \\ 
	 x & x & \cdots & 1+x & \ddots & 1 \\ 
	 \vdots & \vdots & \cdots & \vdots & \ddots &
	 \vdots \\ x & x & \cdots & x & \cdots & 1+x
	\end{pmatrix}
  \begin{pmatrix} 
  d_{n-1,k,0}(x) \\ d_{n-1,k,1}(x) \\ \vdots 
	\\ d_{n-1,k,n-1}(x) 
	\end{pmatrix} \]

\medskip
\noindent
for $k \in \{0, 1,\dots,n-1\}$ and 

\[ \begin{pmatrix} 
   d_{n,k,0}(x) \\ d_{n,k,1}(x) \\ \vdots \\ 
   d_{n,k,n-1}(x) \\ d_{n,k,n}(x) 
	 \end{pmatrix} = 
   \begin{pmatrix} 
	 0 & 1 & \cdots & 1 \\ 
   x & 1 & \cdots & 1 \\ 
	 \vdots & \vdots & \ddots & \vdots \\ 
	 x & x & \cdots & 1 \\ x & x & \cdots & x
	\end{pmatrix}
  \begin{pmatrix} 
  d_{n-1,k-1,0}(x) \\ d_{n-1,k-1,1}(x) \\ \vdots 
	\\ d_{n-1,k-1,n-1}(x) 
	\end{pmatrix} \]

\medskip
\noindent
for $k=n$. In both cases, multiplication on the 
left by the given matrix preserves the 
interlacing of sequences of real-rooted 
polynomials with nonnegative coefficients (see 
\cite[Theorem~2.4]{HZ19} for a more general 
statement, in the former case, and 
Lemma~\ref{lem:new} in the latter) and thus the 
first statement follows by induction on $n$.

We now deduce that $A_n(x) \preceq d_{n,k,j}(x)$
for every $n \in \NN$ and all $0 \le k, j \le n$
with $k+j \le n$. We first show that the sequence 
$(d_{n,k}(x))_{0 \le k \le n}$ is interlacing for 
every $n \in \NN$ (as already mentioned, this also 
follows from \cite[Theorem~3.6]{BS21}). Indeed, by 
the first statement and Proposition~\ref{prop:dnkj} 
(g) we have 
\[ d_{n,k}(x) = d_{n,k,0}(x) \preceq d_{n,k,n}(x)
   = x d_{n,k-1}(x) \]
for $1 \le k \le n$ and hence $d_{n,k-1}(x) \preceq 
d_{n,k}(x)$. Since $d_{n,n}(x)$ has symmetric 
coefficients, with center of symmetry $n/2$, we 
conclude that 
\begin{align*} 
A_n(x) & = d_{n,0}(x) \preceq d_{n,1}(x) \preceq 
\cdots \preceq d_{n,n}(x) \\ & = \iI_n (d_{n,n}(x)) 
\preceq \cdots \preceq \iI_n (d_{n,1}(x)) \preceq 
\iI_n (d_{n,0}(x)) \\ 
       & = \iI_n(A_n(x)) = x A_n(x).
\end{align*}
Since $A_n(x) \preceq x A_n(x)$, our claim follows 
by an application of Lemma~\ref{lem:trans}. We now 
recall from Proposition~\ref{prop:lnkj} (b) that 
$d_{n,k,n-k-j}(x) = \iI_n (d_{n,k,j}(x))$ for all $0 
\le j \le n-k$ and conclude that 
\begin{align*} 
A_n(x) & \preceq d_{n,k}(x) = d_{n,k,0}(x) \preceq 
d_{n,k,1}(x) \preceq \cdots \preceq d_{n,k,n-k}(x) 
\\ & = \iI_n (d_{n,k,0}(x)) = \iI_n (d_{n,k}(x))
       \preceq \iI_n(A_n(x)) \\ & = x A_n(x) 
\end{align*}
for every $k \in \{0, 1,\dots,n\}$. Another 
application of Lemma~\ref{lem:trans} shows that
$A_n(x) \preceq d_{n,k,j}(x)$ for all $0 \le j 
\le n-k$ and the proof follows.
\end{proof}

\begin{proof}[Proof of 
              Theorem~\ref{thm:sd-interlace}.]
Since $\ell_{\fF,n,k,j}(x) = d_{n,k,j}(x)$ for 
$0 \le j \le n-k$ by Proposition~\ref{prop:dnkj} 
(b), the result follows from 
Theorem~\ref{thm:dnkj}.
\end{proof}

\begin{proof}[Proof of Theorem~\ref{thm:sd}.]
Theorem~\ref{thm:uniform} shows that $\ell_V
(\sd(\Gamma),x)$ can be expressed as a nonnegative
linear combination of the polynomials 
$\ell_{\fF,n,k,j}(x)$ with $k, j \in \{0, 
1,\dots,n\}$ and $k + j \le n$. Since each of these 
polynomials is real-rooted and interlaced by 
$A_n(x)$ by Theorem~\ref{thm:sd-interlace}, so 
does $\ell_V(\sd(\Gamma),x)$.
\end{proof}

We close this section with the following corollary, 
which gives a partial answer to 
\cite[Problem~4.12]{Ath16a}.
\begin{corollary} \label{cor:2sd} 
The local $h$-polynomial of the second barycentric 
subdivision $\sd^{(2)}(2^V)$ of the 
$(n-1)$-dimensional simplex is given by the formula
\begin{equation} \label{eq:2sd}
\ell_V(\sd^{(2)}(2^V),x) = \sum_{k=0}^n {n \choose k} 
\sum_{j=0}^{n-k} D_{n-k,j} \, d_{n,k,j}(x)
\end{equation}
for every $n \in \NN$, where $D_{n,j}$ denotes the 
number of permutations $w \in \fS_n$ with no fixed 
points and $j$ excedances.

Equivalently, the coefficient of $x^i$ in the 
polynomial $\ell_V(\sd^{(2)}(2^V),x)$ is equal to 
the number of pairs $(u,v) \in \fS_n \times 
\fS_{n+1}$ such that $\exc(v) = i$, $v^{-1}(1) = 
\exc(u) + 1$ and $v$ has no fixed point larger than 
$n - \fix(u) + 1$, where $\fix(u)$ is the number of 
fixed points of $u$.
\end{corollary}

\begin{proof}
We apply Theorem~\ref{thm:uniform} for $\Gamma = 
\sd(2^V)$ and $\Delta = \sd(\Gamma)$. We have 
$\ell_{\fF,n,k,j}(x) = d_{n,k,j}(x)$ by  
Proposition~\ref{prop:dnkj}~(b) and 
$\ell_F(\Gamma_F,x) = d_{|F|,|F|}(x)$ for every 
$F \subseteq V$, since $\Gamma_F$ is the barycentric 
subdivision of $2^F$. Since there are ${n \choose k}$ 
sets $F \subseteq V$ of cardinality $n-k$, the 
proof of the first statement follows. The second 
statement follows directly from the first.
\end{proof}

\section{Edgewise subdivisions}
\label{sec:esd}

This section proves Theorem~\ref{thm:esd}, following 
the strategy of the proof of Theorem~\ref{thm:sd}, 
given in Section~\ref{sec:sd}, and using the special 
features of the combinatorics of edgewise subdivisions.
Throughout it, $\fF$ stands for the $f$-triangle of 
size $n$ for the $r$-fold edgewise subdivision for 
some fixed integer $r \ge n$. 

We let $E_{n,r}(x) = p_{\fF,n,0}(x) = h_\fF(\sigma_n,
x)$ be the $h$-polynomial of the $r$-fold edgewise 
subdivision of the $(n-1)$-dimensional simplex. By 
\cite[Equation~(8)]{Ath16b} we have
\begin{equation} 
E_{n,r}(x) = \sS^r_0 \left( (1 + x + \cdots 
                             + x^{r-1})^n \right) 
           = \sum_{w \in \wW_{n,r}} x^{\asc(w)},
\label{eq:Enr} 
\end{equation}
where $\wW_{n,r}$ is the set of words $w: \{0, 
1,\dots,n-1\} \to \{0, 1,\dots,r-1\}$ with $w(0) = 0$ 
and $\asc(w)$ is the number of indices $i \in [n-1]$ 
such that $w(i-1) < w(i)$.

\begin{proposition} \label{prop:esdr} 
The following formulas hold for the $r$-fold 
edgewise subdivision for $n \in \NN$ and $k, j \in
\{0, 1,\dots,n\}$:
\begin{align*} 
p_{\fF,n,k}(x) & = \sS^r_0 
   \left( x^k (1 + x + \cdots + x^{r-1})^n 
	 \right), \\ 
\ell_{\fF,n,k}(x) & = \sS^r_0 \left( 
      (1 + x + \cdots + x^{r-1})^{n-k} 
			(x + x^2 + \cdots + x^{r-1})^k \right), \\    
\ell_{\fF,n,k,j}(x) & = \sS^r_0 \left( x^j
      (1 + x + \cdots + x^{r-1})^{n-k} 
			(x + x^2 + \cdots + x^{r-1})^k \right).			
\end{align*}
\end{proposition}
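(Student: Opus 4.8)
The plan is to prove the three formulas in sequence, since each is built from the previous one, and to derive them all from the defining equations of the polynomials together with the product formula for the $h$-polynomial of an edgewise subdivision of the simplex. The starting point is Equation~(\ref{eq:Enr}), which gives $h_\fF(\sigma_n,x) = p_{\fF,n,0}(x) = \sS^r_0\left( (1+x+\cdots+x^{r-1})^n \right)$. For the first formula, I would substitute this into Equation~(\ref{eq:pnk}), which expresses $p_{\fF,n,k}(x) = \sum_{i=0}^k \binom{k}{i} (x-1)^i h_\fF(\sigma_{n-i},x)$. Pulling the Veronese operator out of the (finite) sum and using linearity, this becomes $\sS^r_0\left( \sum_{i=0}^k \binom{k}{i}(x-1)^i (1+x+\cdots+x^{r-1})^{n-i} \right)$. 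The inner sum telescopes by the binomial theorem: writing $s(x) = 1+x+\cdots+x^{r-1}$, we have $\sum_{i=0}^k \binom{k}{i}(x-1)^i s(x)^{n-i} = s(x)^{n-k}\bigl((x-1) + s(x)\bigr)^k = s(x)^{n-k} \cdot x^k \cdot \frac{x^r-1}{x-1}\Big/\ldots$—more precisely $(x-1)+s(x) = x + x^2 + \cdots + x^{r-1} + x^{r}$? Let me instead note $(x-1)+s(x) = x\cdot s(x)$ exactly when... I would check that $(x-1) + (1+x+\cdots+x^{r-1}) = x + x + x^2 + \cdots + x^{r-1} = $ not quite a clean multiple, so the cleaner route is to observe that $s(x)^n x^k$ already has the right form and the claim $p_{\fF,n,k}(x) = \sS^r_0(x^k s(x)^n)$ follows most directly by verifying it satisfies the recurrence~(\ref{eq:pnk-rec}): indeed $\sS^r_0(x^k s(x)^n) - \sS^r_0(x^{k-1}s(x)^n) = \sS^r_0(x^{k-1}(x-1)s(x)^n)$ and $(x-1)s(x) = x^r - 1$, so using (\ref{eq:Sx}) this equals $(x-1)\sS^r_0(x^{k-1}s(x)^{n-1}) \cdot s(x)$-type bookkeeping; the base case $k=0$ is (\ref{eq:Enr}). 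This recurrence-checking approach avoids the telescoping subtlety.

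For the second formula I would feed the first formula into Equation~(\ref{eq:lnk-def}), namely $\ell_{\fF,n,k}(x) = \sum_{i=0}^k (-1)^i \binom{k}{i} p_{\fF,n-i,0}(x) = \sum_{i=0}^k (-1)^i\binom{k}{i}\sS^r_0(s(x)^{n-i})$. Pulling out $\sS^r_0$ and applying the binomial theorem to the inner sum gives $\sS^r_0\bigl( s(x)^{n-k}(s(x)-1)^k \bigr)$, and since $s(x) - 1 = x + x^2 + \cdots + x^{r-1}$ this is exactly the claimed expression. Alternatively, one can use (\ref{eq:lnk-def}) in its first form with $h_\fF(\sigma_{n-i},x)$ directly; both give the same thing. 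For the third formula I would start from the definition~(\ref{eq:lnkj-def}), $\ell_{\fF,n,k,j}(x) = \sum_{i=0}^k (-1)^i\binom{k}{i} p_{\fF,n-i,j}(x)$, substitute the first formula $p_{\fF,n-i,j}(x) = \sS^r_0(x^j s(x)^{n-i})$, pull out $\sS^r_0$, and run the same binomial telescoping on $\sum_{i=0}^k(-1)^i\binom{k}{i} x^j s(x)^{n-i} = x^j s(x)^{n-k}(s(x)-1)^k$, yielding $\sS^r_0\bigl(x^j s(x)^{n-k}(s(x)-1)^k\bigr)$ as claimed.

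The main obstacle is really just bookkeeping: one must be careful that the range $k+j \le n$ (under which $\ell_{\fF,n,k,j}(x)$ is defined) is respected, and that all the sums are genuinely finite so that $\sS^r_0$ commutes with them by linearity—this is immediate since $\sS^r_i$ is linear, but I would state it explicitly. A secondary point of care is the identity $(x-1) \cdot (1+x+\cdots+x^{r-1}) = x^r - 1$, which is what makes the recurrence-checking route for the first formula clean: combined with (\ref{eq:Sx}), multiplication by $x^r-1$ inside $\sS^r_0$ behaves predictably. Once the first formula is in hand, the second and third are essentially corollaries obtained by the same one-line binomial computation, so the write-up should be short.
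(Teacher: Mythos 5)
Your overall plan matches the paper's: derive the first identity from Equation~(\ref{eq:pnk}) and (\ref{eq:Enr}), then obtain the third from Equation~(\ref{eq:lnkj-def}) by substituting the first and telescoping, with the second as the $j=0$ case. The binomial telescoping you perform for the second and third formulas is exactly what the paper does and is correct, since no $(x-1)$ factors are present there.

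Where you diverge is the first formula, and it is worth being precise about why your direct telescoping stalled. The obstacle you hit — that $(x-1)+s(x)$ with $s(x)=1+x+\cdots+x^{r-1}$ is not a clean multiple of $s(x)$ — disappears once you observe that multiplication by $(x-1)$ \emph{outside} $\sS^r_0$ is the same as multiplication by $(x^r-1)$ \emph{inside}: $(x-1)\sS^r_0(g(x)) = \sS^r_0((x^r-1)\,g(x))$, which is immediate from $\sS^r_0(x^r g(x)) = x\,\sS^r_0(g(x))$. With that substitution the inner binomial sum becomes
\[
\sum_{i=0}^{k}\binom{k}{i}(x^r-1)^i s(x)^{n-i} \;=\; s(x)^{n-k}\bigl(s(x)+x^r-1\bigr)^k \;=\; s(x)^{n-k}\bigl(x\,s(x)\bigr)^k \;=\; x^k s(x)^n,
\]
which is precisely the paper's one-line computation. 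Your pivot to checking the recurrence~(\ref{eq:pnk-rec}) is a valid alternative — the verification $\sS^r_0(x^k s(x)^n) - \sS^r_0(x^{k-1}s(x)^n) = \sS^r_0(x^{k-1}(x^r-1)s(x)^{n-1}) = (x-1)\sS^r_0(x^{k-1}s(x)^{n-1})$ together with the base case $k=0$ from (\ref{eq:Enr}) does pin down $p_{\fF,n,k}(x)$ by induction on $k$ — but it is a detour, and your write-up of that step (the stray ``$\cdot\,s(x)$'' and the incomplete sentence) needs tightening before it would be acceptable. In short: the recurrence route works and is not wrong, but the cleaner argument, and the one the paper uses, is to push $(x-1)^i$ through $\sS^r_0$ as $(x^r-1)^i$ and telescope directly; once you have that observation, all three formulas fall out of one uniform computation.
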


\begin{proof}
The second formula is the special case $j=0$ of 
the third. To verify the first and third formulas
we apply Equations~(\ref{eq:pnk}),~(\ref{eq:lnk-def}) 
and~(\ref{eq:Enr}) to compute that 
\begin{align*} 
p_{\fF,n,k}(x) & = \sum_{i=0}^k {k \choose i} 
   (x-1)^i p_{\fF,n-i,0}(x) \\ & = 
\sum_{i=0}^k {k \choose i} (x-1)^i \sS^r_0 \left( 
(1 + x + \cdots + x^{r-1})^{n-i} \right) \\ 
    & = \sS^r_0 \left( \, \sum_{i=0}^k {k \choose i} 
(x^r-1)^i (1 + x + \cdots + x^{r-1})^{n-i} 
                     \right) \\ & = \sS^r_0 
\left( (1 + x + \cdots + x^{r-1})^{n-k} 
			(x + x^2 + \cdots + x^r)^k \right) \\ & =
\sS^r_0 \left( x^k 
      (1 + x + \cdots + x^{r-1})^n \right)
\end{align*}
and
\begin{align*} 
\ell_{\fF,n,k,j}(x) & = \sum_{i=0}^k (-1)^i 
      {k \choose i} p_{\fF,n-i,j}(x) \\ & = 
\sum_{i=0}^k (-1)^i {k \choose i} \sS^r_0 \left( 
x^j (1 + x + \cdots + x^{r-1})^{n-i} \right) 
    \\ & = \sS^r_0 \left( \, \sum_{i=0}^k (-1)^i 
           {k \choose i} 
x^j (1 + x + \cdots + x^{r-1})^{n-i} \right) 
    \\ & = \sS^r_0 \left( x^j 
		       (1 + x + \cdots + x^{r-1})^{n-k} 
			     (x + x^2 + \cdots + x^{r-1})^k \right)
\end{align*}
and the proof follows.
\end{proof}

\begin{proof}[Proof of Theorem~\ref{thm:esd}.]
We follow the steps of the proof of 
Theorem~\ref{thm:sd}. By 
Proposition~\ref{prop:esdr} and 
Equation~(\ref{eq:Sx}), and since $r \ge n$, we have 
\[ \ell_{\fF,n,k,j}(x) = \begin{cases}
  \sS^r_0 (f(x)), & \text{if $j = 0$} \\
  x \sS^r_{r-j} (f(x)), & \text{if $j \ge 1$} 
\end{cases} \]
for $j \in \{0, 1,\dots,n-k\}$, where 
\[ f(x) = x^k (1 + x + \cdots + x^{r-1})^{n-k} 
   (1 + x + \cdots + x^{r-2})^k. \]
This formula and Lemma~\ref{lem:zhang} imply 
that $(\ell_{\fF,n,k,j}(x))_{0 \le j \le n-k}$ 
is an interlacing sequence of real-rooted 
polynomials for all $0 \le k \le n$. In 
particular, $\ell_{\fF,n,k}(x))$ is real-rooted 
for all $0 \le k \le n$. In view of 
Theorem~\ref{thm:uniform}, it suffices to show
that $E_{n,r}(x) \preceq \ell_{\fF,n,k,j}(x))$ 
for all $0 \le k, j \le n$ with $k+j \le n$.

We now claim that the sequence 
$(\ell_{\fF,n,k}(x))_{0 \le k \le n}$ is 
interlacing for every $n \in \NN$ as well. 
We set 
\[ g(x) = (1 + x + \cdots + x^{r-1})^{n-k-1} 
   (x + x^2 + \cdots + x^{r-1})^k. \]
By Proposition~\ref{prop:esdr}, we have 
\begin{align*}
  \ell_{\fF,n,k}(x) & = \sS^r_0 \left( 
  (1 + x + \cdots + x^{r-1}) g(x) \right) = 
	\sS^r_0 (g(x)) + x \sum_{i=1}^{r-1} \sS^r_i 
	(g(x)) \\ 
	\ell_{\fF,n,k+1}(x) & = \sS^r_0 \left( 
  (x + x^2 + \cdots + x^{r-1}) g(x) \right) = 
	x \sum_{i=1}^{r-1} \sS^r_i (g(x))
\end{align*}
for $0 \le k \le n-1$. Since the sequence 
$(\sS^r_{r-j} (g(x)))_{1 \le j \le r}$ is 
interlacing by Lemma~\ref{lem:zhang}, we have 
\[ \sS^r_0 (g(x)) \preceq x \sum_{i=1}^{r-1} 
   \sS^r_i (g(x)) \]
and hence $\ell_{\fF,n,k}(x) \preceq
\ell_{\fF,n,k+1}(x)$ for every $k \in \{0, 
1,\dots,n-1\}$. Since $\ell_{\fF,n,n}(x) = \ell_V
(\esd_r(2^V),x)$ has symmetric coefficients, with 
center of symmetry $n/2$, we conclude that 
\begin{align*} 
E_{n,r}(x) & = \ell_{\fF,n,0}(x) \preceq 
\ell_{\fF,n,1}(x) \preceq \cdots \preceq 
\ell_{\fF,n,n}(x) \\ & = \iI_n (\ell_{\fF,n,n}(x)) 
\preceq \cdots \preceq \iI_n (\ell_{\fF,n,1}(x)) 
\preceq \iI_n (\ell_{\fF,n,0}(x)) \\ 
       & = \iI_n(E_{n,r}(x)).
\end{align*}
Since $E_{n,r}(x) \preceq \iI_n(E_{n,r}(x))$ by 
\cite[Corollary~7.3]{Ath22} and 
\cite[Theorem~2.7]{BS21}, our claim follows by an 
application of Lemma~\ref{lem:trans}. We now 
recall from Proposition~\ref{prop:lnkj} (b) that 
$\ell_{\fF, n,k,n-k-j}(x) = \iI_n 
(\ell_{\fF,n,k,j}(x))$ for all $0 \le j \le n-k$ 
and conclude that 
\begin{align*} 
E_{n,r}(x) & \preceq \ell_{\fF,n,k}(x) = 
\ell_{\fF,n,k,0}(x) \preceq \ell_{\fF,n,k,1}(x) 
\preceq \cdots \preceq \ell_{\fF,n,k,n-k}(x) \\ 
& = \iI_n (\ell_{\fF,n,k,0}(x)) = \iI_n 
(\ell_{\fF,n,k}(x)) \\ & \preceq \iI_n 
(E_{n,r}(x))
\end{align*}
for every $k \in \{0, 1,\dots,n\}$. A new 
application of Lemma~\ref{lem:trans} shows that
$E_{n,r}(x) \preceq \ell_{\fF,n,k,j}(x)$ for all 
$0 \le j \le n-k$ and the proof follows.
\end{proof}

The fact that $(\ell_{\fF,n,k}(x))_{0 \le k \le n}$ 
is an interlacing sequence of real-rooted polynomials
for every $n \in \NN$, which we have proven, confirms 
part (b) of \cite[Conjecture~7.1]{Ath23+} in the 
special case of edgewise subdivisions.
\begin{example} \label{ex:esdr} \rm
Let $\Gamma$ be the stellar subdivision of $2^V$
on the face $V$ (so that $\Gamma$ has exactly 
one vertex not in $V$, with carrier $V$). Then,
\[ h(\Gamma_F, x) = \begin{cases}
   1 + x + \cdots + x^{n-1}, & \text{if $F = V$} \\
   1, & \text{otherwise} \end{cases} \]
for $F \subseteq V$ and
\[ h(\esd_r(\Gamma)_F, x) = \begin{cases}
   \sS^r_0 \left( (1 + x + \cdots + x^{n-1})
	 (1 + x + \cdots + x^{r-1})^n \right), & 
	          \text{if $F = V$} \\ 
	 \sS^r_0 \left( (1 + x + \cdots + x^{r-1})^{|F|}
	 \right), & \text{otherwise.} \end{cases} \]
One can compute that 
\[ \ell_V(\esd_r(\Gamma),x) = \sS^r_0 \left( 
   (x + x^2 + \cdots + x^{n-1}) 
	 (1 + x + \cdots + x^{r-1})^n + 
	 (x + x^2 + \cdots + x^{r-1})^n \right). \]
For $r=2$ and $n=6$ we have 
$\ell_V(\esd_r(\Gamma),x) = 7x + 42x^2 + 63x^3 + 
42x^4 + 7x^5$, which has two nonreal complex 
roots. We note that neither $\Gamma$ nor 
$\esd_r(\Gamma)$ is a flag triangulation of 
$2^V$.
\end{example}

\section*{Acknowledgments}

The author wishes to thank the anonymous referee 
for their thoughtful comments.


\begin{thebibliography}{99}
%
\bibitem{Ath12}
C.A.~Athanasiadis,
\textit{Flag subdivisions and $\gamma$-vectors},
Pacific J. Math. {\bf~259} (2012), 257--278.
%
\bibitem{Ath14}
C.A.~Athanasiadis,
\textit{Edgewise subdivisions, local $h$-polynomials 
and excedances in the wreath product 
${\mathbb Z}_r \wr \mathfrak{S}_n$},
SIAM J. Discrete Math. {\bf~28} (2014), 1479--1492.
%
\bibitem{Ath16a}
C.A.~Athanasiadis,
\textit{A survey of subdivisions and local $h$-vectors},
in \textit{The Mathematical Legacy of Richard~P.~Stanley}
(P.~Hersh, T.~Lam, P.~Pylyavskyy, V.~Reiner, eds.),
Amer. Math. Society, Providence, RI, 2016, pp.~39--51.
%
\bibitem{Ath16b}
C.A.~Athanasiadis,
\textit{The local $h$-polynomial of the edgewise 
subdivision of the simplex},
Bull. Hellenic Math. Soc. (N.S.) {\bf~60} (2016), 
11--19.
%
\bibitem{Ath22}
C.A.~Athanasiadis, 
\textit{Face numbers of uniform triangulations of 
simplicial complexes},
Int. Math. Res. Notices {\bf~2022} (2022), 
15756--15787.
%
\bibitem{Ath22+}
C.A.~Athanasiadis, 
\textit{Triangulations of simplicial complexes and
theta polynomials},
Tohoku Math. J. (to appear), 
{\tt arXiv:2209.01674}.
%
\bibitem{Ath23+}
C.A.~Athanasiadis,
\textit{On the real-rootedness of the Eulerian 
transformation},
J. London Math. Soc. (2) {\bf~111} (2025), 
Article ID e70083, 23pp.
%
\bibitem{AS12}
C.A.~Athanasiadis and C.~Savvidou,
\textit{The local $h$-vector of the cluster subdivision 
of a simplex},
S\'em. Lothar. Combin. {\bf~66} (2012), Article B66c, 
21pp (electronic).
%
\bibitem{Bj95}
A.~Bj\"orner,
\textit{Topological methods}, 
in \textit{Handbook of combinatorics}
(R.L.~Graham, M.~Gr\"otschel and L.~Lov\'asz, eds.),
North Holland, Amsterdam, 1995, pp.~1819--1872.
%
\bibitem{Bra15}
P.~Br\"and\'en,
\textit{Unimodality, log-concavity, real-rootedness 
and beyond},
in \textit{Handbook of Combinatorics} (M.~Bona, ed.), 
CRC Press, 2015, pp.~437--483.
%
\bibitem{BS21}
P.~Br\"and\'en and L.~Solus,
\textit{Symmetric decompositions and real-rootedness},
Int. Math. Res. Not. {\bf~2021} (2021), 7764--7798.
%  
\bibitem{BW08} 
F.~Brenti and V.~Welker, 
\textit{$f$-vectors of barycentric subdivisions}, 
Math. Z. {\bf~259} (2008), 849--865.
%
\bibitem{Cha94}
C.~Chan,
\textit{On subdivisions of simplicial complexes: 
characterizing local $h$-vectors},
Discrete Comput. Geom. {\bf~11} (1994), 465--476.
%
\bibitem{CS07}
M.~Chudnovsky and P.~Seymour,
\textit{The roots of the independence polynomial
of a clawfree graph},
J. Combin. Theory Series B {\bf~97} (2007), 
350--357.
%
\bibitem{CMM18}
M.A.~de~Cataldo, L.~Migliorini and 
M.~Musta\c{t}\u{a},
\textit{Combinatorics and topology of proper toric
maps},
J. Reine Angew. Math. {\bf~744} (2018), 133--163.
%
\bibitem{Ga05}
S.R.~Gal,
\textit{Real root conjecture fails for five- 
and higher-dimensional spheres},
Discrete Comput. Geom. {\bf~34} (2005), 269--284.
%
\bibitem{MGPSS20}
A.~de~Moura, E.~Gunther, S.~Payne, J.~Schouchardt
and A.~Stapledon,
\textit{Triangulations of simplices with vanishing 
local $h$-polynomial},
Algebr. Comb. {\bf~3} (2020), 1417--1430.
%
\bibitem{FSc70}
D.~Foata and M.-P.~Sch\"utzenberger,
Th\'eorie G\'eometrique des Polyn\^omes Eul\'eriens,
Lecture Notes in Mathematics {\bf~138}, 
Springer-Verlag, 1970.
%  
\bibitem{GS20} 
N.~Gustafsson and L.~Solus, 
\textit{Derangements, Ehrhart theory and local 
$h$-polynomials}, 
Adv. Math {\bf~369} (2020), Article 107169.
%
\bibitem{HZ19}
J.~Haglund and P.B.~Zhang,
\textit{Real-rootedness of variations of Eulerian 
polynomials},
Adv. in Appl. Math. {\bf~109} (2019), 38--54.
%
\bibitem{KMS19}
M.~Juhnke-Kubitzke, S.~Murai and R.~Sieg,
\textit{Local $h$-vectors of quasi-geometric and 
barycentric subdivisions},
Discrete Comput. Geom. {\bf~61} (2019), 364--379.
%
\bibitem{KS16}
E.~Katz and A.~Stapledon,
\textit{Local $h$-polynomials, invariants of 
subdivisions and mixed Ehrhart theory},
Adv. Math. {\bf~286} (2016), 181--239.
%
\bibitem{LPS22+}
M.~Larson, S.~Payne and A.~Stapledon,
\textit{The local motivic monodromy conjecture
for simplicial nondegenerate singularities},
{\tt arXiv:2209.03553}.
%
\bibitem{LPS23}
M.~Larson, S.~Payne and A.~Stapledon,
\textit{Resolutions of local face modules, 
functoriality and vanishing of local $h$-vectors},
Algebr. Comb. {\bf~6} (2023), 1057--1072.
%
\bibitem{MW17}
F.~Mohammadi and V.~Welker,
\textit{Combinatorics and algebra of geometric 
subdivision operations},
in \textit{Computations and Combinatorics in 
Commutative Algebra},
Springer Lecture Notes in Mathematics {\bf~2176}, 
2017, pp.~77--122.
%
\bibitem{Sta92}
R.P.~Stanley,
\textit{Subdivisions and local $h$-vectors},
J. Amer. Math. Soc. {\bf~5} (1992), 805--851.
%
\bibitem{StaCCA}
R.P.~Stanley,
Combinatorics and Commutative Algebra,
second edition, Birkh\"auser, Basel, 1996.
%
\bibitem{StaEC1}
R.P.~Stanley,
Enumerative Combinatorics, vol.~1,
Cambridge Studies in Advanced Mathematics {\bf~49},
Cambridge University Press, second edition, 
Cambridge, 2011.
%
\bibitem{Sta17}
A.~Stapledon,
\textit{Formulas for monodromy},
Res. Math. Sci. {\bf~4} (2017), Research Paper 4, 
42pp.
%
\bibitem{Wa92}
D.G.~Wagner,
\textit{Total positivity of Hadamard products},
J. Math. Anal. Appl. {\bf~163} (1992), 459--483.
%
\bibitem{Zh18}
P.B.~Zhang,
\textit{The local $h$-polynomials of cluster 
subdivisions have only real zeros},
Bull. Aust. Math. Soc. {\bf~98} (2018), 258--264.
%
\bibitem{Zh19}
P.B.~Zhang,
\textit{On the real-rootedness of local 
$h$-polynomials of edgewise subdivisions},
Electron. J. Combin. {\bf~26} (2019), 
Research Paper 1.52, 6pp (electronic).
%
\end{thebibliography}
\end{document}